\newcommand{\allowpagebreak}
\newtheorem{thm}{Theorem}[section]
\newtheorem{pro}[thm]{Proposition}
\newtheorem{ex}[thm]{Example}
\theoremstyle{definition}
\newtheorem{defi}[thm]{Definition}
\newcommand {\emptycomment}[1]{}
\newcommand{\ot}{\otimes}
\def\LM{\mathcal{LM}}
\newcommand{\al}{\alpha}
\newcommand{\alin}[1]{\alpha(#1)}
\newcommand{\alm}{\alpha_M}
\newcommand{\alv}{\alpha_V}
\newcommand{\alw}{\alpha_W}
\newcommand{\alaw}{\alpha_{A\oplus W}}
\newcommand{\almv}{\alpha_{M\oplus V}}
\newcommand{\lam}{\lambda}
\newcommand{\trl}{\triangleleft}
\newcommand{\trr}{\triangleright}
\newcommand{\g}{{A}}
\newcommand{\wm}{\widehat{M}}
\newcommand{\wg}{\widehat{A}}
\newcommand{\wal}{\widehat{\alpha}}
\newcommand{\frkg}{{A}}
\newcommand{\fg}{{A}}
\newcommand{\frkh}{V}
\newcommand{\dlam}{_\lambda}
\newcommand{\cdlam}{\cdot_\lambda}
\newcommand{\dM}{f}
\newcommand{\E}{\mathrm{E}}
\newcommand{\Hom}{\mathrm{Hom}}
\newcommand{\Ker}{\mathrm{Ker}}
\newcommand{\Imm}{\mathrm{Im}}
\newcommand{\id}{\mathrm{id}}
\begin{document}

\title{Cohomology of Hom-associative algebras in Loday-Pirashvili category with applications}

\author{Tao Zhang\thanks{{E-mail:} zhangtao@htu.edu.cn}\\
{\footnotesize College of Mathematics, Henan Normal University, Xinxiang 453007, PR China}}

\date{}
\maketitle

\footnotetext{{\it{Keyword}: Hom-associative algebras, Hom-dialgebras, Loday-Pirashvili category, representation, cohomology, deformations.}}

\footnotetext{{\it{Mathematics Subject Classification (2020)}}: 16E40, 16S80, 16W99.}

\begin{abstract}
We introduce the concept of Hom-associative algebra structures in Loday-Pirashvili category.
The cohomology theory of Hom-associative algebras in this category is studied.
Some applications on deformation and abelian extension theory are given.
We also introduce the notion of Nijenhuis operators to describe trivial deformations.
It is proved that equivalent classes of abelian extensions are one-to-one correspondence to the elements of the second cohomology groups.
\end{abstract}

%\tableofcontents

\section{Introduction}
A Hom-associative algebra is an algebra ${A}$ with an additional linear map $\al:{A}
\to {A}$  satisfying the Hom-associative identity:
$$\al(x)(yz)=(x y)\al(z),$$
for all $x, y, z\in {A}$.
The representations, abelian extensions, deformations  and cohomology theory of Hom-algebras were studied in \cite{AEM,CGM,CMM,HLS,MS1,MS2,Yau}.
It is known that abelian extensions and deformations of Hom-type algebras are governed by the second cohomology group.
There are many other types of Hom-structures including Hom-Hopf algebras, Hom-Lie-Rinehart algebras, Hom-Lie antialgebras and Hom-Lie-Yamaguti algebras, see \cite{Caen,ZZ1,ZZ2,Zhang,Zhang1}.

In the remarkable paper \cite{LP}, Loday and Pirashvili  introduced a tensor category $\LM$ of linear maps.
Roughly speaking, it is a category consists of linear maps  $f:V\to W$ as objects.
A morphism  between two objects $f:V\to W$ and $f':V' \to W'$ is a pair of linear maps $(\phi, \psi)$ where $\phi:V\to V', \psi: W\to W'$ such that $\psi\circ f=f'\circ \phi$.
Then they introduced the concept of associative algebra $(M,\g, f)$ in $\LM$.
It is a bimodule $M$ of an ordinary associative algebra ${A}$ such that  $f: M\to \frkg$ is a bimodule map:
\begin{eqnarray}
\label{cm000}
f(x\cdot m)=x f(m), \quad  f(m\cdot x)= f(m)x,
\end{eqnarray}
for all  $x\in \g$ and $m\in M$.
This Loday-Pirashvili  category provides a rich framework for studying various constructions related to Leibniz algebras. In particular, a Leibniz algebra becomes a Lie object in this tensor category, and the universal enveloping algebra of Leibniz algebras becomes a cocommutative Hopf algebra in this tensor category. In \cite{Kur}, Kurdiani  constructs a cohomology theory for Lie algebras in $\LM$.
Unfortunately, the representation and cohomology theory of this associative algebra $(M,\g, f)$ in $\LM$ is not established in Loday and Pirashvili's paper \cite{LP}.
This is the main motivation of this paper. We will fill this gap by studying the Hom-associative algebras in Loday-Pirashvili  category

%He show that abelian extensions of Lie algebras in $\LM$ are classified by the second cohomology group.
%In \cite{Rov}, Rovi studied Lie algebroids in the Loday-Pirashvili  category.

In this paper, we introduced the concept of Hom-associative algebras in $\LM$ as follows.
Let $(\g,\al)$ be a Hom-associative algebra, $(M,\alm)$ be a bimodule of $(\g,\alpha )$.
Then a Hom-associative algebra in $\LM$ is a bimodule map $f: M\to \frkg$ such that the following conditions hold:
\begin{eqnarray}
\label{cm001}
\al\circ f(m)&=&f\circ\alm(m),\\
\label{cm002}
f( x\cdot m )&=&x f(m), \quad  f( m\cdot x )= f(m)x,
\end{eqnarray}
for all  $x\in \g$ and $m\in M$.
When $\al$ and $\alm$ are identity maps, this is just the Loday-Pirashvili's associative algebra in $\LM$.
We will show that this new concept of Hom-associative algebras in $\LM$ is closely related to the concept of Hom-dialgebras.
%Now it is natural a question what is the representation and cohomology theory for Hom-associative algebras in $\LM$?
%In this paper, we will  give an answer to this question.
We investigate the representation and cohomology theory for this type of Hom-associative algebras.
As applications, we will study the infinitesimal deformation and abelian extension theory in detail.

The organization of this paper is as follows.
In Section 2, we review some basic facts about Hom-associative algebras and Hom-dialgebras.
In Section 3, we introduced the concept of Hom-associative algebras in $\LM$ and study its elementary properties.
In Section 4, we investigate the representation and cohomology theory of Hom-associative algebras in $\LM$.
We show that given a Hom-associative algebra in $\LM$ and a bimodule,
we can construct a new a Hom-associative algebra in $\LM$ on their direct sum spaces which is called  the semidirect product.
Low dimensional cohomologies are given in detail in this section.
In Section 5, we study applications of low dimensional cohomolgy.
This include two subsections. In subsection 5.1, we investigate the infinitesimal deformations of Hom-associative algebras in $\LM$.
The notion of Nijenhuis operators is introduced to describe trivial deformations.
In the last subsection 5.2,  we prove that equivalent classes of abelian extensions are one-to-one correspondence to the elements of the second cohomology groups.

Throughout the rest of this paper, we work over a fixed field of characteristic 0.
A Hom-vector space is a pair $(V, \al_V)$ consisting of  a vector space $V$ and  a linear map $\al_V: V\to V$.
A direct sum of Hom-vector spaces  $(V, \al_V)$ and $(W, \al_W)$ is a direct sum of vector spaces $V\oplus W$ with linear map $\al_V\oplus\al_W: V\oplus W\to V\oplus W$ given by $(\al_V\oplus\al_W)(v+w)=\al_V(v)+\al_W(w)$.
The space of linear maps from a vector space  $V$ to $W$ is denoted by $\Hom(V,W)$.

\section{Preliminaries}
%\section{Hom-vector space  and Hom-associative algebras}

In this section, we recall some definitions and fix some notations about Hom-associative algebras.

A Hom-vector space is a pair $(M, \al_M)$ consisting of  a vector space $M$ and  a linear map $\al_M: M\to M$. A morphism $f: (M, \al_M) \to (N, \al_N) $ of Hom-vector spaces is a linear map $f : M\to N$ such that $f\circ \al_M = \al_N\circ f$.

\begin{defi}[\cite{MS1}]
A Hom-associative algebra is a triple $(\g,\alpha)$ where $\g$ a vector space equipped with
a multiplication $\g\otimes \g\to\g: x\otimes y\mapsto xy$  and a linear map $\alpha:\g\to \g$ satisfying the following
Hom-associative identity:
$$\al(x)(yz)=(x y)\al(z),$$
for all $x,y,z\in\g$.
A Hom-associative algebra is called a multiplicative Hom-associative algebra if $\alpha$ is an algebraic homomorphism, i.e, for all $x,y\in\frkg$, $\alpha(xy)= \alpha(x) \alpha(y).$
\end{defi}

In the following of this paper, we always assume our Hom-algebras are multiplicative.

A homomorphism between two Hom-associative algebras $(\g,\alpha)$ and $(\g',\alpha')$ is a linear map $\phi:\g\to\g'$ such that
\begin{equation}
\phi\circ\alpha(x)=\alpha'\circ\phi(x),\quad \phi(xy)=\phi(x)\phi(y),
\end{equation}
for all $x,y\in\g.$

\begin{defi}[\cite{Yau1}]
A Hom-dialgebra is a vector space $A$ equipped with two Hom-associative linear maps $\dashv, \vdash:{A}\otimes {A} \to {A}$ such that the following identity holds:
\begin{itemize}
\item[$\bullet$]{\rm(D1)}\quad  $ (x\dashv y)\dashv \al(z)  =  \al(x) \dashv (y \vdash z)$,
\item[$\bullet$]{\rm(D2)}\quad  $ (x\vdash y)\dashv \al(z)  = \al(x)\vdash (y\dashv z)$,
\item[$\bullet$]{\rm(D3)}\quad  $ (x\dashv y)\vdash \al(z)  =  \al(x) \vdash (y\vdash z)$.
\end{itemize}
for all  $x,y,z\in A$.
\end{defi}

It is proved in \cite{Yau1} that given a Hom-dialgebra $({A}, \dashv, \vdash)$, there is a Hom-Leibniz algebra with bracket $[x,y]=x\vdash y-y\dashv x$.
A homomorphism between two Hom-dialgebras $({A},\alpha_{A})$ and $({A}',\alpha_{A'})$ is a linear map $\phi:{A}\to{A}'$ such that  the following identity holds:
\begin{equation}
\phi\circ\alpha_L(x)=\alpha_{L'}\circ\phi(x),\quad \phi(x\dashv y)=\phi(x)\dashv' \phi(y), \quad \phi(x\vdash y)=\phi(x)\vdash' \phi(y),
\end{equation}
for all $x,y\in{A}.$

\begin{ex}[\cite{MS1}]
Given an associative algebra $\g$ and an algebraic homomorphism $\alpha:\g\to\g$, define $\cdot_\alpha:\g\otimes \g\to\g$ by
$$x\cdot_\alpha y=\alpha(xy),\ \ \ \forall x,y\in\g.$$
Then $(\g, \cdot_\alpha,\alpha)$ is a Hom-associative algebra.
\end{ex}

%\begin{defi}[\cite{HLS},\cite{Sheng}]
% A representation of a Hom-associative algebra $(\g,\alpha)$ on a Hom-vector space $(M,\al_M)$ is a linear map $\cdot:A\otimes M \to M$ such that:
% \begin{eqnarray}
% \label{rep01}
% \rho(\alpha(x))\circ\alpha_M&=&\alpha_M\circ\rho(x),\\
% \label{rep02}
% \rho(xy)\circ\alpha_M&=&\rho(\alpha(x))\circ\rho(y)-\rho(\alpha(y))\circ\rho(x),
% \end{eqnarray}
%for all $x,y\in\g.$
%\end{defi}

\begin{defi}
For a Hom-associative algebra $(\g,\al)$, a bimodule of $(\g,\al)$ is a Hom-vector space $(M,\alm)$ together with two linear maps (called the left module and right module) $\cdot:A\otimes M\to M$ and $\cdot:M\otimes A\to M$ satisfying
\begin{eqnarray}
\label{mod00}
\alm(x\cdot m)&=&\alin x\cdot\alm(m),\\
\label{mod11}
\alm(m\cdot x)&=&\alm(m)\cdot\al(x),\\
\label{mod01}
{\alin x\cdot (y\cdot m)}&=&(xy)\cdot\alm(m),\\
\label{mod02}
{\alm(m)\cdot (xy)}&=&(m \cdot x)\cdot \alin y,\\
\label{mod03}
{\alin x \cdot(m\cdot y)}&=&(x\cdot m)\cdot \alin y,
\end{eqnarray}
for all $x,y\in\g$, $m\in M$.
\end{defi}

\begin{pro}[\cite{MS2}]
  Given a Hom-associative algebra $(\g,\al)$ and a bimodule over $(M,\alm)$.  Define $\alpha\oplus\alpha_{M}:\frkg\oplus M\longrightarrow \frkg\oplus M$ and a multiplication by
  $$
(\alpha\oplus \alpha_{M})(x, m)=(\alpha(x),\alpha_{M}(m)),
 $$
 \begin{equation}
   (x, m)(y, n)=(xy,x\cdot n+m\cdot y).
 \end{equation}
 Then $(\frkg\oplus M,\alpha\oplus\alpha_{M})$ is a Hom-associative algebra,
 which we call the semidirect product of $(\frkg,\alpha)$ and $(M,\alm)$.
\end{pro}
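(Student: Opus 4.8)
The plan is to verify the left Hom-Leibniz identity \eqref{leftleib} directly on $(\frkg\oplus M,\alpha+\alpha_M)$; the right case will be completely parallel. Since $\alpha+\alpha_M$ is manifestly linear and the proposed product is bilinear, the only thing requiring proof is the identity itself (multiplicativity of $\alpha+\alpha_M$ is not part of the definition, so it need not be addressed). I would take three generic elements $X=(x,m)$, $Y=(y,n)$, $Z=(z,p)$ of $\frkg\oplus M$ and expand both sides of
$$(\alpha+\alpha_M)(X)\,(YZ)=(XY)\,(\alpha+\alpha_M)(Z)+(\alpha+\alpha_M)(Y)\,(XZ)$$
using the two defining formulas for the product and for $\alpha+\alpha_M$.

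Carrying out the expansion of, for instance, the left-hand side gives
$$(\alpha+\alpha_M)(X)(YZ)=\bigl(\alpha(x)(yz),\ \alpha(x)\cdot(y\cdot p)+\alpha(x)\cdot(n\cdot z)+\alpha_M(m)\cdot(yz)\bigr),$$
and likewise for the two summands on the right. The key structural observation is that the resulting identity decouples into its two components. Equating first ($\frkg$-valued) components reduces precisely to the left Hom-Leibniz identity \eqref{leftleib} for $(\frkg,\alpha)$, which holds by hypothesis. The second ($M$-valued) component is trilinear, and each of its summands is homogeneous of degree one in exactly one of the module variables $m,n,p$; since $m,n,p$ are arbitrary, the component equation therefore splits into three independent equations, one for each module variable.

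The remaining work is to identify these three equations, which is bookkeeping rather than a genuine obstacle: collecting the terms carrying $p$ reproduces axiom (L1), the terms carrying $n$ reproduce (L3), and the terms carrying $m$ reproduce (L2) after the evident relabelling of $x,y,z$. As $(M,\alpha_M)$ is a bimodule, all three hold, the $M$-component vanishes identically, and the left Hom-Leibniz identity follows. For the right Hom-Leibniz case I would substitute the same generic elements into the right Hom-Leibniz identity; the identical degree-of-homogeneity argument separates it into the right Hom-Leibniz identity for $(\frkg,\alpha)$ together with three $M$-valued equations that coincide exactly with (R1), (R2) and (R3). The only point demanding care is getting the term-by-term correspondence right; no further idea is needed.
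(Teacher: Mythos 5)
Your proof is correct and follows the same route the paper intends: the paper dispatches this proposition with the single line ``by direct computations,'' and your expansion is exactly that computation, with the first component giving the Hom-Leibniz identity for $(\g,\al)$ and the $M$-component splitting by homogeneity in $m,n,p$ into (L2), (L3), (L1) respectively (and (R1)--(R3) in the right-handed case). The term-by-term matching you describe checks out, so nothing is missing.
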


\section{Hom-associative algebras in $\LM$}

In this section, we introduce the concept of Hom-associative algebras in $\LM$ and give its elementary properties.

\begin{defi} \label{defi:Lie 2}
A Hom-associative algebra in $\LM$ contains of a linear map $\dM: M\to \frkg$
where $(\g,\al)$ is a Hom-associative algebra, $(M,\alm)$ is a bimodule of $(\g,\alpha )$,
and $f$ is a bimodule map, that is the following conditions hold:
\begin{eqnarray}
\label{cm01}
\al\circ f(m)&=&f\circ\alm(m),\\
\label{cm02}
f( x\cdot m )&=&x f(m), \quad  f( m\cdot x )= f(m)x,
\end{eqnarray}
for all  $x\in \g$ and $m\in M$.
\end{defi}

\begin{pro}[\cite{Yau1}]\label{prop0}
For any Hom-associative algebra  $(M,\fg, f)$ in $\LM$, we have a Hom-dialgebra on $(M,\alm)$  with multiplication defined by
$$m\dashv n\triangleq m\cdot f(n),\quad m\vdash n\triangleq f(m)\cdot n,$$
for all $m,n\in M$,
\end{pro}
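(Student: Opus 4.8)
The plan is to reduce each defining axiom of a left Hom-Leibniz dialgebra on $(M,\alm)$ to one of the bimodule axioms (L1), (L2), (L3), by using the equivariance of $f$ to push it through the two products. The crucial preliminary is that \eqref{cm01} and \eqref{cm02} compute $f$ on both products: for all $m,n\in M$,
$$f(m\dashv n)=f(m\cdot f(n))=f(m)f(n),\qquad f(m\vdash n)=f(f(m)\cdot n)=f(m)f(n),$$
by \eqref{cm02}, while \eqref{cm01} gives $f(\alm(m))=\al(f(m))$. In particular $f$ carries both $\dashv$ and $\vdash$ to the single multiplication of $\g$, and the outer actions appearing in every axiom become bimodule actions by elements of the form $f(\cdot)$.

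First I would check that $(M,\vdash)$ is a left Hom-Leibniz algebra. Expanding $\alm(m)\vdash(n\vdash p)=(m\vdash n)\vdash\alm(p)+\alm(n)\vdash(m\vdash p)$ and replacing each $\vdash$ by its definition, the outer left actions become $f(\alm(m))=\al(f(m))$ and $f(m\vdash n)=f(m)f(n)$; writing $x=f(m)$, $y=f(n)$, the identity becomes $\al(x)\cdot(y\cdot p)=(xy)\cdot\alm(p)+\al(y)\cdot(x\cdot p)$, which is precisely (L1). Note that no multiplicativity of $\alm$ is needed, since the definition of a Hom-Leibniz algebra only asks for the identity.

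Next I would verify (DL1)--(DL3) by the same substitution together with the two displayed consequences of \eqref{cm01}--\eqref{cm02}. After setting $x=f(m)$, $y=f(p)$, axiom (DL1) collapses to $\al(x)\cdot(n\cdot y)=(x\cdot n)\cdot\al(y)+\alm(n)\cdot(xy)$, which is (L3). Axiom (DL2) has the same left-hand side as the Hom-Leibniz identity for $\vdash$, and since $f(m\dashv n)=f(m\vdash n)=f(m)f(n)$ its right-hand side agrees as well, so it again reduces to (L1). Finally, setting $x=f(n)$, $y=f(p)$, axiom (DL3) collapses to $\alm(m)\cdot(xy)=(m\cdot x)\cdot\al(y)+\al(x)\cdot(m\cdot y)$, which is (L2).

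Since the underlying left Hom-Leibniz identity and all three axioms (DL1)--(DL3) hold, $(M,\dashv,\vdash,\alm)$ is a left Hom-Leibniz dialgebra; the right-handed statement is entirely parallel, reducing (DR1)--(DR3) to (R1)--(R3) through the identical computation of $f$ on the two products. I do not expect a genuine conceptual obstacle here. The only real work is the bookkeeping of matching each dialgebra axiom to the correct bimodule axiom, and taking care to apply \eqref{cm02} in its correct form---$f(x\cdot m)=xf(m)$ for $\vdash$ and $f(m\cdot x)=f(m)x$ for $\dashv$---at each step.
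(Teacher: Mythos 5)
Your proposal is correct and follows essentially the same route as the paper: push $f$ through the products via \eqref{cm01}--\eqref{cm02} to get $f(m\dashv n)=f(m\vdash n)=f(m)f(n)$ and $f(\alm(m))=\al(f(m))$, then match the Hom-Leibniz identity for $\vdash$ and (DL2) to (L1), (DL1) to (L3), and (DL3) to (L2), exactly as in the paper's proof. Your up-front isolation of the two consequences of equivariance is a slightly cleaner packaging of what the paper does inline, but the argument is the same.
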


\begin{proof} The proof is given in \cite{Yau1}. We give it here for the reader's convenience.
We only verify the following condition (D1) holds
$$(m\dashv n)\dashv\alm(p)=\alm(m)\dashv (n\vdash p).$$
In fact, the left hand side is equal to
$$(m\dashv n)\dashv\alm(p)=(m\cdot f(n))\cdot f\alm(p)=(m\cdot f(n))\cdot \alm f(p).$$
and the right hand side   is equal to
$$\alm(m)\dashv (n\vdash p)=\alm(m)\cdot f (f(n)\cdot p)=\alm(m) \cdot  (f(n)f(p)).$$
Thus the two sides are equal to each other because $(M,\alm)$ is a right module of $(\fg,\al)$.
Similar computations show that  conditions (D2) and (D3) hold since $(M,\alm)$ is a bimodule of $(\fg,\al)$.
The proof is completed.
\end{proof}

\begin{pro}\label{prop1}
Let $(\g,\al)$ be a Hom-associative algebra and $(M,\alm)$ be a bimodule.
Then  $ f: M\rightarrow\g$ is a Hom-associative algebra in $\LM$
if and only if the maps $(\id,  f) : \g\rtimes M \to \g\rtimes\g$ is a homomorphism of Hom-associative algebras.
\end{pro}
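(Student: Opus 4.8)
The plan is to unwind the definition of a homomorphism of Hom-Leibniz algebras applied to the pair $(\id, f)$ and to read off that the two resulting conditions coincide exactly with the equivariance relations \eqref{cm01} and \eqref{cm02}. Recall that $\g\rtimes M$ is the semidirect product of $(\g,\al)$ with the bimodule $(M,\alm)$, so its twisting map is $\al+\alm$ and its product is $(x,m)(y,n)=(xy,\,x\cdot n+m\cdot y)$. On the other side, $\g\rtimes\g$ is the semidirect product of $(\g,\al)$ with its regular (adjoint) bimodule, where the left and right actions are just the multiplication of $\g$; hence its twisting map is $\al+\al$ and its product is $(x,a)(y,b)=(xy,\,xb+ay)$. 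The map under consideration is $(\id,f)\colon (x,m)\mapsto(x,f(m))$. I would treat the left case in full and remark that the right case is identical after replacing the left semidirect products and the first relation of \eqref{cm02} by their right-handed analogues.

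First I would check compatibility with the twisting maps. Computing $(\id,f)\circ(\al+\alm)$ on $(x,m)$ gives $(\al(x),\,f(\alm(m)))$, whereas $(\al+\al)\circ(\id,f)$ gives $(\al(x),\,\al(f(m)))$. These agree for all $(x,m)$ precisely when $f\circ\alm=\al\circ f$, which is exactly condition \eqref{cm01}.

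Next I would check multiplicativity. Applying $(\id,f)$ to $(x,m)(y,n)=(xy,\,x\cdot n+m\cdot y)$ yields $(xy,\,f(x\cdot n)+f(m\cdot y))$, while the product of the images $(x,f(m))(y,f(n))$ equals $(xy,\,x\,f(n)+f(m)\,y)$. Thus multiplicativity of $(\id,f)$ amounts to the single identity
$$f(x\cdot n)+f(m\cdot y)=x\,f(n)+f(m)\,y\qquad\text{for all }x,y\in\g,\ m,n\in M.$$

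The one point that requires a moment of care, and the closest thing to an obstacle, is to recognize that this combined identity is equivalent to the two separate relations in \eqref{cm02}. For the converse direction (homomorphism $\Rightarrow$ object in $\LM$), I would specialize $m=0$ to obtain $f(x\cdot n)=x\,f(n)$ and $n=0$ to obtain $f(m\cdot y)=f(m)\,y$; for the forward direction, simply adding the two relations of \eqref{cm02} recovers the combined identity. Putting the two checks together shows that $(\id,f)$ is a homomorphism of Hom-Leibniz algebras if and only if both \eqref{cm01} and \eqref{cm02} hold, that is, if and only if $f\colon M\to\g$ is a Hom-Leibniz algebra in $\LM$. Beyond this bookkeeping there is no genuine difficulty; the only thing one must not overlook is that $\g\rtimes\g$ is built from the regular bimodule of $\g$ on itself, so that its product is really $(x,a)(y,b)=(xy,\,xb+ay)$ — with any other identification the matching of the second components would break down.
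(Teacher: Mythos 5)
Your proof is correct and follows essentially the same route as the paper's: compare the two twisting maps to recover \eqref{cm01}, then compare the two products to recover \eqref{cm02}. You are in fact a bit more careful than the paper at the second step --- the paper only records the single relation $f(x\cdot m)=x f(m)$ there, while your specializations $m=0$ and $n=0$ correctly split the combined identity $f(x\cdot n)+f(m\cdot y)=x f(n)+f(m)y$ into both halves of \eqref{cm02}.
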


\begin{proof}
To see when the map $(\id,  f) : \g\rtimes M \to \g\rtimes\g$ is a homomorphism of Hom-associative algebras, we first check that
\begin{eqnarray*}
(\id,  f)\circ\alpha_{\g\rtimes M}(x,m)&=&(\id,  f)(\alpha(x),\alpha_ M(m))\\
&=&(\alpha(x), f\alpha_ M(m))=(\alpha(x),\alpha_ M f(m)),\\
\alpha_{\g\rtimes\g}\circ(\id,  f)(x,m)&=&\alpha_{\g\rtimes\g}(x, f(m))=(\alpha(x),\alpha f(m)),
\end{eqnarray*}
thus $(\id,  f)\circ\alpha_{\g\rtimes M}=\alpha_{\g\rtimes\g}\circ(\id,  f)$ if and only if $\alpha\circ f(m)= f\circ\alpha_ M(m)$.
This is condition \eqref{cm01}.

On the other hand, since
\begin{eqnarray*}
&&{(\id,  f) (x, m) (x', m')} = (xx',  f(x\cdot m')+f(m\cdot x')),
\end{eqnarray*}
and
\begin{eqnarray*}
&&{(\id,  f)(x, m) (\id,  f)(x', m')}=(xx', x\cdot f (m')+ f(m)\cdot x'),
\end{eqnarray*}
thus the right hand side of above two equations are equal to each other if and only if $f( x\cdot m )=x f(m)$ holds, which is the condition \eqref{cm02}.
Therefore  $(\id,  f)$ is a homomorphism of Hom-associative algebras if and only if  the conditions \eqref{cm01} and \eqref{cm02} hold.
This complete the proof.
\end{proof}

%\begin{defi}
%A $Cat^1$-Hom-associative algebra $\left(\g_1,\g_0,s,t\right)$ consists of a Hom-associative algebra $\g_1$ together with a Hom-Lie subantialgebra $\g_0$ and the structural homomorphisms: $s, t: \g_1\to \g_0$ such that
%\begin{eqnarray}
%\label{cat01}&&s\circ i=t\circ i=id_{\g_0},
%\end{eqnarray}
%where $id_{\g_0}:\g_0\to \g_0$ are the identity maps,
%$\ker s$ and $\ker t$ are the kernel spaces of $s$ and $t$.
%\end{defi}

%\begin{pro}
%The categories $\mathbf{XLie}$ and $\mathbf{Cat^1Lie}$ are equivalent .
%\end{pro}
%\begin{ex}
%When $M=0$, this is an Hom-associative algebra.
%When $f=0$, this is a semi-direct product of $\g$ and $(M,\alm)$.
%\end{ex}

\begin{ex}
Let $(\g,\al)$ be a Hom-associative algebra, $(M,\alm)=(\g\otimes \g, \al\ot \al)$, $f:\g\otimes \g\to \g$ be the multiplication in $\g$. Define a bimodule of $(\g,\al)$  on $(\g\otimes \g, \al\ot \al)$ by
$$x\cdot (a\otimes b)=xa\otimes \alin b, \quad (a\otimes b)\cdot x=\al(a)\otimes bx.$$
Then we have
\begin{eqnarray*}
\al\circ f(a\otimes b)&=&\al(ab)=\al(a)\al(b)\\
&=&f(\alin a\otimes \alin b)=f\circ \al\ot \al(a\otimes b),\\
\end{eqnarray*}
and
\begin{eqnarray*}
f(x\cdot a\otimes b)&=&(xa) \alin b\\
&=&\alin x (ab)=x f(a\otimes b).
\end{eqnarray*}
Thus for any Hom-associative algebra $(\g,\al)$, we obtain a Hom-associative algebra $(\g\otimes \g, \g, f)$ in $\LM$.
In this case, by  Proposition \ref{prop0}, $(\g\otimes \g,\al\ot \al)$  becomes a Hom-dialgebra under the multiplication
\begin{eqnarray*}
x\otimes y\dashv a\otimes b&=&\al(x)\otimes y (ab),\\
x\otimes y\vdash a\otimes b&=&(xy)a\otimes \al(b).
\end{eqnarray*}
 for all $x\otimes y, a\otimes b\in \g\otimes \g$.
\end{ex}

Let $(M,\fg, f)$ and $(M',\fg', f')$ be two Hom-associative algebras in $\LM$. A morphism between them is a pair of maps
$\phi=(\phi_0,\phi_1)$ where $\phi_0:\g\to\g'$ is Hom-associative algebra homomorphism and $\phi_1:M\to M'$ is $\fg$-equivariant map such that
$$\phi_0(xy)=\phi_0(x)\phi_0(y),\quad \phi_1(x\cdot m)=\phi_0(x)\cdot \phi_1(m),\quad  f'\circ \phi_1=\phi_0\circ f.$$

\section{Representation and Cohomology}

In this section, the representation and cohomology theory of a Hom-associative algebra in $\LM$ are given.
\begin{defi}
Let $(M,\g, f)$ be a Hom-associative algebra in $\LM$.
A bimodule of $(M,\g, f)$ is an object $(V,W,\varphi)$ in $\LM$ such that the following conditions are satisfied:

(i)  $(V,\alv)$ and $(W,\alw)$  are bimodules of  $(\g,\al)$  respectively;
% \begin{eqnarray}
% \rho_{1}(\alpha(x))\circ\alpha_V&=&\alpha_V\circx\cdot ,\\
% \rho_{1}(xy)\circ\alpha_V&=&\rho_{1}(\alpha(x))\circ\rho_{1}(y)-\rho_{1}(\alpha(y))\circx\cdot ,
% \end{eqnarray}
%  \begin{eqnarray}
% \rho_{2}(\alpha(x))\circ\alpha_W&=&\alpha_W\circ\rho_{2}(x),\\
% \rho_{2}(xy)\circ\alpha_W&=&\rho_{2}(\alpha(x))\circ\rho_{2}(y)-\rho_{2}(\alpha(y))\circ\rho_{2}(x),
% \end{eqnarray}

(ii) $\varphi:(V,\alv)\to (W,\alw)$ is a bimodule map:
\begin{eqnarray}\label{deflm01}
\varphi(x\cdot v)=x\cdot \varphi(v),\quad \varphi(v\cdot x)= \varphi(v)\cdot x;
\end{eqnarray}

(iii) there exists linear maps $\trr: W\otimes M\to V$ and $\trl: M\otimes W\to V$ such that:
\begin{eqnarray}\label{deflm02}
\varphi (w\trr m)=w\cdot f(m),\quad\varphi (m\trl w)= f(m)\cdot w;
\end{eqnarray}

(iv) these bimodule structure satisfying the following compatibility conditions
\begin{eqnarray}
\label{deflm11}
\al(x) \cdot (w\trr m)=(x\cdot w)\trr \alm(m),\\
\label{deflm12}
 \al_W(w)\trr  (x\cdot m)= (w\cdot x)\trr \alm(m),\\
 \label{deflm13}
 (m\trl w)\cdot \al(x) = \alm(m)\trl (w\cdot x),\\
\label{deflm14}
(m\cdot x) \trl \alw(w) =\alm(m)\trl (x\cdot w),\\
\label{deflm15}
\al(x) \cdot (m\trl w)=(x\cdot m)\trl \alw(w),\\
\label{deflm16}
 \alw(w)\trr  (m\cdot x)= (w\trr m)\cdot \al(x),
\end{eqnarray}
where $x\in \g$, $m\in M$, $v\in V$ and  $w\in W$.
\end{defi}

The above two equations \eqref{deflm01} and \eqref{deflm02} imply that the following diagrams commute:
\begin{equation}
\begin{gathered}
 \xymatrixcolsep{3.5pc}
 \xymatrixrowsep{2.5pc}
    \xymatrix{
 \g \otimes V  \ar[r]^-{}    \ar[d]_{1 \otimes \varphi} & V  \ar[d]^{\varphi} \\
       \g\otimes W  \ar[r]^{}  &  W  }
\end{gathered}\quad
\begin{gathered}
 \xymatrixcolsep{3.5pc}
 \xymatrixrowsep{2.5pc}
    \xymatrix{
 V \otimes A  \ar[r]^-{}    \ar[d]_{ \varphi \otimes 1} & V  \ar[d]^{\varphi} \\
       W\otimes A  \ar[r]^{}  &  W  }
\end{gathered}
\end{equation}
%%%%%%%%%%%%%%%%%%
and
\begin{equation}
\begin{gathered}
 \xymatrixcolsep{3.5pc}
 \xymatrixrowsep{2.5pc}
    \xymatrix{
M \otimes W  \ar[r]^{\trr }    \ar[d]_{f \otimes 1} & V  \ar[d]^{\varphi} \\
      A\otimes W  \ar[r]^{}  &  W  }
\end{gathered}
\quad
%\label{module square 2}
\begin{gathered}
 \xymatrixcolsep{3.5pc}
 \xymatrixrowsep{2.5pc}
    \xymatrix{
 W\otimes M  \ar[r]^{\trl}    \ar[d]_{1 \otimes f} & V  \ar[d]^{\varphi} \\
      W\otimes A  \ar[r]^{}  &  W  }
\end{gathered}.
\end{equation}

For example, let $(V,W,\varphi)=(M,\g, f)$, then we get the adjoint representation of $(M,\g, f)$ on itself as follows:
$(M,\alm)$ and $(\g,\al)$ are bimodules of $(\g,\al)$ in a natural way,
 $\varphi=f$ is a bimodule map:
\begin{eqnarray}
f(x\cdot m)=xf(m)=x\cdot f(m),
\end{eqnarray}
and there exists  maps $\trl:M\otimes \g\to M, m\trl x\triangleq m\cdot x$ and $\trr:A\otimes M\to M, x\trr m\triangleq x\cdot m$ such that
\begin{eqnarray}
f(m\trl x)=f(m\cdot x)=f(m)x=f(m)\cdot x.
\end{eqnarray}

We construct semidirect product of a Hom-associative algebra  $(M,\g, f)$  and its bimodule $(V,W,\varphi)$.
\begin{pro}\label{pro:semidirectproduct}
  Given a bimodule of the Hom-associative algebra $(M,\g, f)$ on $(V,W,\varphi)$. Define on $(M\oplus V,\frkg\oplus W,\widehat{f}=f+\varphi)$
  the following maps
\begin{equation}
\left\{\begin{array}{rcl}
\widehat{f}(m+v)&\triangleq&\dM(m)+\varphi(v),\\
{(x+w)(x'+w')}&\triangleq&xx'+x\cdot w'+w\cdot x',\\
{(x+w){\cdot} (m+v)}&\triangleq& x\cdot m +x\cdot v+ w\trr m,\\
{(m+v){\cdot} (x+w)}&\triangleq& m\cdot x +v\cdot x+ m\trl w.
\end{array}\right.
\end{equation}
 Then $(M\oplus V,\frkg\oplus W, \widehat{f})$ is a  Hom-associative algebra in $\LM$, which is called the semidirect product of the  Hom-associative algebra of
 $(M,\g, f)$ and $(V,W,\varphi)$.
\end{pro}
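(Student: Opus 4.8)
The goal is to show that the data $(M\oplus V,\,\g\oplus W,\,\widehat f)$ with the maps given assembles into a Hom-Leibniz algebra in $\LM$. By Definition \ref{defi:Lie 2} (and its left version), this amounts to three things: first, that $\g\oplus W$ is a left Hom-Leibniz algebra; second, that $M\oplus V$ is a bimodule of $\g\oplus W$; and third, that $\widehat f=f+\varphi$ is an equivariant map satisfying the two relations \eqref{cm01} and \eqref{cm02}. I would organize the proof exactly along these three checkpoints and verify them in that order.

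**Step 1 (the ambient Hom-Leibniz algebra).**
The space $\g\oplus W$ with $(\alpha+\alw)$ and the product $(x+w)(x'+w')=xx'+x\cdot w'+w\cdot x'$ is precisely the semidirect product of the Hom-Leibniz algebra $(\g,\al)$ with its bimodule $(W,\alw)$. So this first checkpoint is not a new computation at all: it is an immediate citation of the Proposition right before Section 3 (the semidirect product of a left Hom-Leibniz algebra and a bimodule), using that $(W,\alw)$ is a bimodule of $(\g,\al)$ by part (i) of the bimodule definition. I would dispose of it in one sentence.

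**Step 2 (the bimodule structure).**
Here I must check that $M\oplus V$, with $\almv=\alm+\alv$ and the left/right actions
\[
(x+w)\cdot(m+v)=x\cdot m+x\cdot v+w\trr m,\qquad (m+v)\cdot(x+w)=m\cdot x+v\cdot x+m\trl w,
\]
is a bimodule of the Hom-Leibniz algebra $\g\oplus W$ over $M\oplus V$ in the sense of Definition (L1)--(L3), together with the equivariance relations \eqref{mod00}. The strategy is to substitute these definitions into (L1), (L2), (L3) and expand everything into components lying in $M$ (from the $\g$-on-$M$ actions) and in $V$ (from the $\g$-on-$V$ and $\trr,\trl$ actions), then sort the resulting terms by type. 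The $V$-valued and $M$-valued components separate, and I expect each scalar-type identity to reduce to a known fact: the $M$-components collapse to the fact that $(M,\alm)$ is already a bimodule of $(\g,\al)$ (its own (L1)--(L3)), while the genuinely new $V$-valued pieces are exactly the compatibility relations \eqref{deflm11}--\eqref{deflm16}. I would present (L1) in full as the model computation, splitting it into its $M$-part and its $V$-part and pointing out which hypothesis closes each, and then simply assert that (L2) and (L3) follow by the same bookkeeping, matching the remaining mixed terms against \eqref{deflm12}--\eqref{deflm16}. The equivariance conditions \eqref{mod00} for $\almv$ follow at once from \eqref{mod00} for $\alm$, for $\alv$, and the two equivariance relations for $\trr,\trl$ (equations \eqref{deflm11}--\eqref{deflm16} also encode these multiplicativity facts, but the plain compatibility with $\alw$ is the one used).

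**Step 3 (equivariance of $\widehat f$) and the main obstacle.**
Finally I would verify \eqref{cm01} and \eqref{cm02} for $\widehat f=f+\varphi$. For \eqref{cm01}, $\widehat f\circ\almv(m+v)=f(\alm m)+\varphi(\alv v)$ while $(\alpha+\alw)\circ\widehat f(m+v)=\al(f m)+\alw(\varphi v)$; these agree by \eqref{cm01} for $f$ together with the equivariance $\alw\circ\varphi=\varphi\circ\alv$ from part (ii). For \eqref{cm02} I compute $\widehat f\big((x+w)\cdot(m+v)\big)=f(x\cdot m)+\varphi(x\cdot v+w\trr m)$ and compare with $(x+w)\,\widehat f(m+v)=(x+w)(f m+\varphi v)=x\,f(m)+x\cdot\varphi(v)+w\cdot f(m)$; the three summands match term by term using \eqref{cm02} for $f$, the equivariance $\varphi(x\cdot v)=x\cdot\varphi(v)$ of \eqref{deflm01}, and the defining relation $\varphi(w\trr m)=w\cdot f(m)$ of \eqref{deflm02}; the right-action case is symmetric, using $\varphi(m\trl w)=f(m)\cdot w$. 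The only real obstacle in the whole argument is the bookkeeping in Step 2: the danger is mis-sorting the cross terms or attributing a term to the wrong one of \eqref{deflm11}--\eqref{deflm16}, since several of these compatibility relations look alike and differ only in the placement of $\trl$ versus $\trr$ and of $\al(x)$ versus $\alw(w)$. I would therefore be careful to label, for each of (L1)--(L3), exactly which two of the six compatibility identities are consumed, so that all six are accounted for across the three cases; everything else is routine term matching.
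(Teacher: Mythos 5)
Your proposal is correct and follows essentially the same route as the paper's own proof: verify the equivariance conditions \eqref{cm01}--\eqref{cm02} for $\widehat f=f+\varphi$ using \eqref{deflm01}--\eqref{deflm02}, and check the bimodule axioms for $M\oplus V$ over $\g\oplus W$ by expanding (L1)--(L3) componentwise and matching the mixed terms against \eqref{deflm11}--\eqref{deflm16}, with (L1) done as the model case. The only difference is cosmetic: you make explicit (by citing the semidirect-product proposition of Section 2) that $\g\oplus W$ is itself a Hom-Leibniz algebra, a step the paper leaves implicit, and you perform the bimodule check before rather than after the equivariance check.
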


\begin{proof}
First,  since the Hom-map on the direct sum vector space $M\oplus V$ are given by $\al_{M\oplus V}=\al_{M}+\al_{V}$, then we have
\begin{eqnarray*}
\al_{\frkg\oplus W}\circ \widehat{f}(m+v)&=&\al\circ f(m)+\alw\circ\varphi(v)\\
&=&f\circ\alm (m)+\varphi\circ\alv(v)\\
&=&\widehat{f}\circ\al_{M\oplus V}(m+v).
\end{eqnarray*}

Next, we verify that $\widehat{f}$ is a bimodule map. We have the equality
\begin{eqnarray}\label{semi01}
\widehat{f}{\big((x+w)\cdot (m+v)\big)}={(x+w)\widehat{f}(m+v)}.
\end{eqnarray}
The left hand side of \eqref{semi01} is
\begin{eqnarray*}
\widehat{f}{\big((x+w)\cdot (m+v)\big)}&=&\widehat{f}{\big( x\cdot m +x\cdot v+ w\trr m\big)}\\
&=&f( x\cdot m )+\varphi(x\cdot v)+\varphi (w\trr m),
\end{eqnarray*}
and the right hand side of \eqref{semi01} is
\begin{eqnarray*}
{(x+w)\widehat{f}(m+v)}&=&(x+w)(\dM(m)+\varphi(v))\\
&=&x\dM(m)+x\cdot \varphi(v)+w\cdot f(m).
\end{eqnarray*}

Similarly, we have the equality
\begin{eqnarray}\label{semi02}
\widehat{f}{\big((m+v)\cdot (x+w)\big)}={\widehat{f}(m+v)(x+w)}.
\end{eqnarray}
The left hand side of \eqref{semi02} is
\begin{eqnarray*}
\widehat{f}{\big( (m+v)\cdot (x+w)\big)}&=&\widehat{f}{\big(m\cdot x +v\cdot x+ m\trl w\big)}\\
&=&f(m\cdot x)+\varphi(v\cdot x)+\varphi (m\trl w),
\end{eqnarray*}
and the right hand side of \eqref{semi02} is
\begin{eqnarray*}
{\widehat{f}(m+v)(x+w)}&=&(\dM(m)+\varphi(v))(x+w)\\
&=&f(m) x+\varphi(v)\cdot x+f(m)\cdot w.
\end{eqnarray*}
Thus the two sides of  \eqref{semi01} and \eqref{semi02}  are equal to each other by conditions \eqref{deflm01} and \eqref{deflm02}.

Finally, we verify that $M\oplus V$ carries a bimodule structure over the Hom-associative algebra $A\oplus W$.
The left module condition is
\begin{eqnarray}\label{sembimod01}
\big((x+w)(x'+w')\big)\cdot\almv(m+v)=\alaw(x+w)\cdot\big((x'+w')\cdot(m+v)\big).
\end{eqnarray}
The left hand side of \eqref{sembimod01} is
\begin{eqnarray*}
&&\big((x+w)(x'+w')\big)\cdot\almv(m+v)\\
&=&\big(xx'+x\cdot w'+w\cdot x'\big)\cdot(\alm(m)+\alv(v))\\
&=&(xx')\cdot \alm(m)+(xx')\cdot\alv(v)+(x\cdot w')\trr \alm(m)+(w\cdot x')\trr \alm(m)
\end{eqnarray*}
and the right hand side of  \eqref{sembimod01} is
\begin{eqnarray*}
&&\alaw(x+w)\cdot\big((x'+w')\cdot(m+v)\big)\\
&=&(\al(x)+\alw(w))\cdot\big(x'\cdot m+x'\cdot v+w'\trr m\big)\\
&=&\al(x)\cdot (x'\cdot m)+\al(x)\cdot (x'\cdot v)+\al(x)\cdot (w'\trr m)+\alw(w)\trr (x'\cdot w).
\end{eqnarray*}
Thus the two side of \eqref{sembimod01} is equal to each other if and only if \eqref{deflm11} and \eqref{deflm12} hold.
Similar computations show that $M\oplus V$ carries a right module structure over the Hom-associative algebra $A\oplus W$.
 if and only if \eqref{deflm13} and \eqref{deflm14} hold, and it satisfy the bimodule conditions if and only if \eqref{deflm11}--\eqref{deflm16} hold.
This complete the proof.
\end{proof}

%\section{Cohomology}
Now we revisit the cohomology of Hom-associative algebras as follows.
Let $(\g, \al)$ be  a Hom-associative algebra $(V,\alv)$ be a bimodule.

A $k$-hom-cochain on $\frkg$ with values in $V$ is defined to be a
$k$-cochain $\omega\in \Hom(\otimes^k\frkg;V)$ such that it is compatible with
$\alpha$ and $\alv$ in the sense that $\alv\circ \omega=\omega\circ \alpha^{\otimes k}$, i.e.
$$
\alv(\omega(u_1,\cdots,u_k))=\omega(\alpha(u_1),\cdots,\alpha(u_k)).
$$
Denote by $ C^k_{\alpha,A}(\frkg;V)$ the set of $k$-hom-cochains.
Define $\delta:C^k_{\alpha,A}(\frkg;V)\longrightarrow C^{k+1}(\frkg;V)$ by setting
\begin{eqnarray*}
\delta \omega(u_1,\cdots,u_{k+1})&=&\alpha^{k-1}(u_1)\cdot \omega(u_2,\cdots,u_{k+1})\\
&&+\sum_{i=1}^k(-1)^{i}\omega(\alpha(u_1)\cdots,u_i u_{i+1},\cdots,\alpha(u_{k+1}))\\
  &&+(-1)^{k+1}\omega(u_1,\cdots,u_{k})\cdot\alpha^{k-1}(u_{k+1}).
\end{eqnarray*}
It is proved that $\delta\circ \delta=0$, thus one obtain a cohomology theory for Hom-associative algebra \cite{AEM}.

Next we develop a cohomology theory for $(M,\g, f)$.
Let  $(V,W,\varphi)$ be a bimodule of $(M,\g, f)$ and the $k$-cochian  $C^{k}((M,\g, f),(V,W,\varphi))$ to be the space:
\begin{eqnarray}
\Hom(\otimes^{k} \g,W)\oplus\Hom(\mathcal{A}^{k-1},V)\oplus \Hom(\mathcal{A}^{k-2},W).
\end{eqnarray}
where
\begin{eqnarray*}
 &&\Hom(\mathcal{A}^{k-1},V):=
 \bigoplus_{i=1}^k  \Hom(\underbrace{A\otimes\cdots\otimes A}_{i-1}\otimes M\otimes \underbrace{A\otimes\cdots\otimes A}_{k-i},V),\\
  &&\Hom(\mathcal{A}^{k-2},W):=
 \bigoplus_{i=1}^k  \Hom(\underbrace{A\otimes\cdots\otimes A}_{i-1}\otimes M\otimes \underbrace{A\otimes\cdots\otimes A}_{k-1-i},W)
 \end{eqnarray*}
is the direct sum of all possible tensor powers of $A$ and $M$ in which $A$ appears $k-1$ and $k-2$  times but $M$ appears only once for all $k\geq 2$.
For  $k=0, 1$, we define $\Hom(\mathcal{A}^{0},V)=\Hom(M,V)$,
$\Hom(\mathcal{A}^{1},V)=\Hom(A\otimes M,V)\oplus \Hom(M\otimes A,V)$,
$\Hom(\mathcal{A}^{0},W)=\Hom(M,W)$ and $\Hom(\mathcal{A}^{1},W)=\Hom(A\otimes M,W)\oplus \Hom(M\otimes A,W)$.

We also define the coboundary map by
\begin{eqnarray}
D(\omega,\mu,\nu,\theta)=(-\delta_1\omega, \delta_2(\mu+\nu)-h\omega, -l\omega+\varphi^\sharp(\mu+\nu)-\delta_3\theta),
\end{eqnarray}
where $\delta_1,\delta_2,\delta_3$ are  coboundary maps in Hom-associative algebra cohomology of $(\g,\al)$ with coefficient in $W$,
$\Hom(M,V)\cong M^*\otimes V$, $\Hom(M,W)\cong M^*\otimes W$ respectively,  $M^*$ is dual bimodule of $A$,
and the following maps:
\begin{eqnarray}
&&\varphi^\sharp: \Hom(\mathcal{A}^{k-1},V)\to \Hom(\mathcal{A}^{k-1},W),\\
&&h=(h_1, h_2): \Hom({A}^{k},W)\to \Hom(\mathcal{A}^{k},V),\\
&&l=(l_1, l_2): \Hom({A}^{k},W)\to \Hom(\mathcal{A}^{k-1},W)
\end{eqnarray}
by
\begin{eqnarray}
&&\varphi^\sharp(\mu)(x_1, \cdots, x_{k-1},m)=\varphi(\mu(x_1, \cdots, x_{k-1}, m)),\\
&&\varphi^\sharp(\nu)(m,x_1, \cdots, x_{k-1})=\varphi(\nu(m,x_1, \cdots, x_{k-1})),\\
&&h_1(\omega)(x_1, \cdots, x_{k},m)=\omega( x_1, \cdots, x_{k})\trr \al_{M}(m),\\
&&h_2(\omega)(m,x_1, \cdots, x_{k})=\al_{M}(m)\trl \omega( x_1, \cdots, x_{k}),\\
&&l_1(\omega)(m, x_1, \cdots, x_{k-1})=\omega(f(m),x_1, \cdots, x_{k-1}),\\
&&l_2(\omega)(x_1, \cdots, x_{k-1},m)=\omega(x_1, \cdots, x_{k-1}, f(m)).
\end{eqnarray}

We conjecture that this is a cochain complex $C^k((M,\g, f),(V,W,\varphi))$ whose cohomology group
$$H^k((M,\g, f),(V,W,\varphi))=Z^k((M,\g, f),(V,W,\varphi))/B^k((M,\g, f),(V,W,\varphi))$$
is defined as the cohomology group of $(M,\g, f)$ with coefficients in $(V,W,\varphi)$.
Since in this paper, we only need the second cohomology group, the proof of $D\circ D = 0$ for $k=2$ will given in detail as follows.

More precisely, the cochain complex is given by
\begin{equation} \label{eq:complex}
\begin{split}
 & \qquad \qquad\  W\stackrel{D_0}{\longrightarrow}\\
 &  \quad\Hom(\g,W)\oplus\Hom(M,V)\stackrel{D_1}{\longrightarrow}\\
 & \Hom(A\otimes A, W)\oplus \Hom(\mathcal{A}^{1}V)\oplus \Hom(M,W)\stackrel{D_2}{\longrightarrow}\\
  & \Hom(\otimes^3 \g, W)\oplus \Hom(\mathcal{A}^{2}, V)\oplus \Hom(\mathcal{A}^{1},W)\stackrel{D_3}{\longrightarrow}\cdots
\end{split}
\end{equation}
Thus, one can write the cochain complex in components as follows:
$$
\xymatrix{
  W \ar[d]_{-\delta_1}\ar[dr]^{\varphi\sharp}& & & & \\
  \Hom(\g,W)\qquad\ar[d]_{-\delta_1}\ar@<.5em>[dr]^ {-h_1}\ar@<1em>[dr]_{-h_2} \ar@<.5em>[drr]^ {\qquad-l_1} \ar@<.1em>[drr]_{\quad\qquad\qquad-l_2} &\hspace{-1cm} \oplus  \Hom(M,V)  \ar[d]_{\delta_2}\ar[dr]^{\varphi^\sharp}&&& \\
\Hom(A\otimes A, W)\qquad\ar[d]_{-\delta_1}\ar@<.5em>[dr]^ {-h_1}\ar@<1em>[dr]_{-h_2}
\ar@<.5em>[drr]^ {\qquad-l_1} \ar@<.1em>[drr]_{\quad\qquad\qquad-l_2} & \oplus \Hom(\mathcal{A}^1, V)
            \ar[d]_{\delta_2}\ar[dr]^{\varphi^\sharp}    & \hspace{-.5cm}\oplus\Hom(M,W)\ar[d]_{-\delta_3}  &&\\
\Hom(\otimes^3 \g, W) \qquad\ar[d]_{-\delta_1}\ar@<.5em>[dr]^ {-h_1}\ar@<1em>[dr]_{-h_2}
\ar@<.5em>[drr]^ {\qquad-l_1} \ar@<.1em>[drr]_{\quad\qquad\qquad-l_2} &\oplus \Hom(\mathcal{A}^2, V)
            \ar[d]_{\delta_2}  \ar[dr]^{\varphi^\sharp}   & \hspace{-.5cm} \oplus \Hom(\mathcal{A}^1,W)\ar[d]_{-\delta_3} & &  \\
\Hom(\otimes^4 \g, W)\qquad    & \oplus  \Hom(\mathcal{A}^3, V)  \    &\oplus \Hom(\mathcal{A}^2,W)& &  \\
        }
$$

$\vdots$

For 1-cochain $(N_0,N_1)\in \Hom(\g,W)\oplus\Hom(M,V)$ such that
\begin{eqnarray}
\label{NN01}
&&N_0\al(x)=\alw N_0(x),\quad \alv N_1(m)=N_1\alm(m),
\end{eqnarray}
 the coboundary map is
%%%%%%%%%%%%%%%%%%%%%%%%%%%
\begin{eqnarray*}
D_1(N_0,N_1)(x,y)&=&-\delta_1N_0(x,y)\notag\\
&=&N_0(x)\cdot y+ x\cdot N_0(y) - N_0(xy),\\
D_1(N_0,N_1)(x,m)&=&(-h_1N_0+\delta_2 N_1)(x,m)\notag\\
&=&N_0(x)\trr m+ x\cdot N_1(m) - N_1(x\cdot m),\\
D_1(N_0,N_1)(m,x)&=&(-h_2N_0+\delta_2 N_1)(m,x)\notag\\
&=&N_1(m)\cdot x+ m\trl N_0(x) - N_1(m\cdot x),\\
D_1(N_0,N_1)(m)&=&(-l_1N_0+\varphi^\sharp  N_1)(m)\notag\\
&=&\varphi\circ N_1(m)-N_0\circ f(m).
\end{eqnarray*}
Thus a 1-cocycle is   $(N_0,N_1)\in \Hom(\g,W)\oplus\Hom(M,V)$, such that
%%%%%%%%%%%%%%%%%%%%%%%%%%%
\begin{eqnarray}
%\label{N0N1}
%&&N_0\al(x)=\alw N_0(x),\quad \alv N_1(m)=N_1\alm(m),\\
&&\varphi\circ N_1(m)-N_0\circ f(m)=0,\\
&&N_0(x)y + x N_0(y) - N_0(xy)=0,\\
&&N_0(x) \trr m + x\cdot N_1(m) - N_1(x\cdot m)=0,\\
&&N_1(m) \cdot x + m\trl N_0(x) - N_1(m\cdot x)=0.
\end{eqnarray}

For a 2-cochain
$$(\omega,\mu,\nu,\theta)\in \Hom(A\otimes A, W)\oplus \Hom(\mathcal{A}^1,V)\oplus \Hom(M,W),$$
 we get
\begin{eqnarray*}
D_2(\omega,\mu,\nu,\theta)\in &&\Hom(\otimes^3 \g, W)\oplus \Hom(\mathcal{A}^2,V)\oplus \Hom(\mathcal{A}^1,W),
\end{eqnarray*}
where
\begin{eqnarray*}
 &&\Hom(\mathcal{A}^2,V):=\Hom(A\otimes A\otimes M,V)\oplus \Hom(A\otimes M\otimes A,V)\oplus \Hom(M\otimes A\otimes A,V),
%&& \Hom(\mathcal{A}^1,W):= \Hom(A\otimes M,W)\oplus \Hom(M\otimes A,W),
\end{eqnarray*}
and $D_2$ is defined by
\begin{eqnarray*}
D_2(\omega,\mu,\nu,\theta)(x, y, z)&=&-\delta_1\omega(x, y,  z)\notag\\
&=&\alin x\cdot \omega (y, z)+\omega (\al(x), yz)\\
&&-\omega(xy,\al(z))-\omega (x, y)\cdot \al(z),\\
D_2(\omega,\mu,\nu,\theta)(x, y,m)&=&(-h_1\omega+\delta_2 \mu)(x, y,m)\notag\\
&=&\al(x)\cdot\mu(y,m)-\omega(x,y)\trr\alm(m)\nonumber\\
&&+\mu(\al(x),y\cdot m)-\mu(xy,\alm(m)),\\
D_2(\omega,\mu,\nu,\theta)(x, m, y)&=&\delta_2 (\mu+\nu)(x, y,m)\notag\\
&=&\al(x)\cdot \nu(m,y))+\mu(\al(x), m\cdot y)\\
&&-\nu(x\cdot m, \al(y))-\mu(x,m)\cdot\al(y),\\
D_2(\omega,\mu,\nu,\theta)(m,x, y)&=&(-h_2\omega+\delta_2 \nu)(x, y,m)\notag\\
&=&\alm(m)\trl\omega(x,y)+\nu(\alm(m), xy)\\
&&-\nu(m,x)\cdot \al(y)-\nu(m\cdot x,\al(y)),\\
D_2(\omega,\mu,\nu,\theta)(x,m)&=&(-l_1\omega+\varphi^\sharp\mu-\delta_3\theta)(x,m)\notag\\
&=&\theta ( x\cdot m )+\varphi(\mu (x,m))-\omega (x, f(m))-x\cdot\theta (m),\\
D_2(\omega,\mu,\nu,\theta)(m,x)&=&(-l_2\omega+\varphi^\sharp\nu-\delta_3\theta)(m,x)\notag\\
&=&\theta ( m\cdot x )+\varphi(\nu (m,x))-\omega (f(m),x)-\theta (m)\cdot x.
\end{eqnarray*}
Thus a 2-coboundary is  $(\omega,\mu,\nu,\theta)\in \Hom(A\otimes A, W)\oplus \Hom(\g,\Hom(M,V))\oplus \Hom(M,W)$ such that
$(\omega,\mu,\nu,\theta)=D_1(N_0,N_1),$ i.e.
\begin{eqnarray*}
&&\theta(m)=\varphi\circ N_1(m)-N_0\circ f(m),\\
&&\omega(x,y)=N_0(x)\cdot y + x\cdot N_0(y) - N_0(xy),\\
&&\mu(x,m)=N_0(x) \trr m + x\cdot N_1(m) - N_1(x\cdot m),\\
&&\nu(m,x)=N_1(m) \cdot x + m\trl N_0(x) - N_1(m\cdot x),
\end{eqnarray*}
and a 2-cocycle is  $(\omega,\mu,\nu,\theta)\in \Hom(A\otimes A, W)\oplus \Hom(\g,\Hom(M,V))\oplus \Hom(M,W)$ such that
\begin{eqnarray}
\label{2-coc01}
&&\theta ( x\cdot m )+\varphi (\mu (x,m))-x\cdot\theta (m)-\omega (x, f(m))=0,\\
\label{2-coc02}
&&\theta ( m\cdot x )+\varphi(\nu (m,x))-\omega (f(m),x)-\theta (m)\cdot x=0,\\
\label{2-coc03}
&&\al(x)\cdot \omega (y, z)+\omega (\al(x), yz)-\omega(xy,\alin z)-\omega (x, y)\cdot \al(z)=0,\\
\label{2-coc04}
&&\al(x)\cdot\mu(y,m)+\mu(\al(x),y\cdot m)-\omega(x,y)\trr\alm(m)-\mu(xy,\alm(m))=0,\\
\label{2-coc05}
&&\alm(m)\trl\omega(x,y)+\nu(\alm(m), xy)-\nu(m,x)\cdot \al(y)-\nu(m\cdot x,\al(y))=0,\\
\label{2-coc06}
&&\al(x)\cdot \nu(m,y))+\mu(\al(x), m\cdot y)-\nu(x\cdot m, \al(y))-\mu(x,m)\cdot\al(y)=0.
\end{eqnarray}

\begin{pro}\label{prop:1cob}
The space of 2-coboundaries $B^2((M,\g, f),(V,W,\varphi))$ is contained in the space of 2-cocycles $Z^2((M,\g, f),(V,W,\varphi))$, i.e., $D^2=D_2\circ D_1=0$.
\end{pro}

\begin{proof}
We should verify that if $(\omega,\mu,\nu,\theta)=D_1(N_0,N_1)$, then  $D_2(\omega,\mu,\nu,\theta)=D_2D_1(N_0,N_1)=0$. Thus it must satisfy \eqref{2-coc01}--\eqref{2-coc06}.
In fact,
\begin{eqnarray*}
&&\theta ( x\cdot m )+\varphi (\mu (x,m))-x\cdot\theta (m)-\omega (x, f(m))\\
&=&\varphi\circ N_1( x\cdot m)-N_0\circ f( x\cdot m)\\
&&+\varphi(N_0(x)\trr m + x\cdot N_1(m) - N_1(x\cdot m))\\
&&-x\cdot(\varphi\circ N_1(m)-N_0\circ f(m))\\
&&-N_0(x)\cdot f(m) - x\cdot N_0(f(m)) +N_0(xf(m))\\
&=&\varphi\circ N_1( x\cdot m)-N_0(xf(m))\\
&&+\varphi(N_0(x) \trr m) + x\cdot\varphi( N_1(m)) - \varphi\circ N_1(x\cdot m)\\
&&-x\cdot \varphi (N_1(m))+x\cdot N_0(f(m))\\
&&-N_0(x)\cdot f(m) - x\cdot N_0(f(m)) +N_0(xf(m))\\
&=&0.
\end{eqnarray*}
where $\varphi(N_0(x) \trr m)=N_0(x)\cdot f(m)$ by  \eqref{deflm02}.
\begin{eqnarray*}
&&\al(x)\cdot\mu(y,m)+\mu(\al(x),y\cdot m)-\omega(x,y)\trr\alm(m)-\mu(xy,\alm(m))\\
&=&\al(x)\cdot\big(N_0(y) \trr m + y\cdot N_1(m) - N_1(y\cdot m)\big)\\
&&+N_0(\al(x)) \trr (y\cdot m) + \al(x)\cdot N_1(y\cdot m) - N_1(\al(x)\cdot (y\cdot m))\\
&&-(N_0(x)\cdot y + x\cdot N_0(y) - N_0(xy))\trr\alm(m)\\
&&-N_0(xy) \trr \alm(m) + (xy)\cdot N_1(\alm(m)) - N_1( (xy)\cdot \alm(m))\\
&=&\al(x)\cdot(N_0(y) \trr m) + \al(x)\cdot(y\cdot N_1(m)) - \al(x)\cdot N_1(y\cdot m)\\
&&+N_0(\al(x)) \trr (y\cdot m) + \al(x)\cdot N_1(y\cdot m) - N_1(\al(x)\cdot (y\cdot m))\\
&&-(N_0(x)\cdot y)\trr\alm(m) - (x\cdot N_0(y))\trr\alm(m) + N_0(xy)\trr\alm(m)\\
&&-N_0(xy) \trr \alm(m) - (xy)\cdot N_1(\alm(m))+ N_1( (xy)\cdot \alm(m))\\
&=&0.
\end{eqnarray*}
where $$\al(x)\cdot(N_0(y) \trr m)=(x\cdot N_0(y))\trr\alm(m),$$
$$\al(x)\cdot(y\cdot N_1(m))= (xy)\cdot \alv N_1(m)= (xy)\cdot N_1(\alm(m)),$$
$$N_0(\al(x)) \trr (y\cdot m) =(\alw N_0(x)\cdot y)\trr m=(N_0(x)\cdot y)\trr\alm(m)$$
by \eqref{mod01}, \eqref{deflm12} and  \eqref{NN01} respectively.

Therefore we have verified that \eqref{2-coc01} and \eqref{2-coc04} hold.
The other equalities can be checked similarly. This completes the proof.
\end{proof}

\begin{defi}\label{def:2group}
The second cohomology group  of  $(M,\g, f)$ with coefficients in $(V,W,\varphi)$ is defined as the quotient of
the space of 2-coboundaries module the space of 2-cocycles, which is denoted by $\mathbf{H}^2((M,\g, f),(V,W,\varphi))$.
\end{defi}

\section{Applications}
\subsection{Infinitesimal deformations}

Let $(M,\g, f)$ be a Hom-associative algebra in $\LM$ and $\theta: M \to\g,~\omega:A\otimes A\to \g$, $\mu:\g \otimes M \to M, ~\nu: M\otimes A\to M$ be linear maps. Consider a $\lambda$-parametrized family of linear operations:
\begin{eqnarray*}
 \dM\dlam (m)&\triangleq&\dM (m)+\lambda\theta (m),\\
 {x\cdlam y}&\triangleq& xy+ \lambda\omega (x, y),\\
 {x\cdot_\lam m}&\triangleq&  x\cdot m + \lambda\mu(x, m),\\
  {m\cdot_\lam x}&\triangleq&  m\cdot x + \lambda\nu(m, x).
 \end{eqnarray*}
%\comment{Since we use $[\cdot,\cdot]$ at the beginning, here we need to unify.}

If  $(M_\lambda, \g\dlam, f\dlam)$ forms a  Hom-associative algebra in $\LM$, then we say that
$(\omega,\mu,\nu,\theta)$ generates a 1-parameter infinitesimal deformation of $(M,\g, f)$.

\begin{thm}\label{thm:deformation} With the notations above,
$(\omega,\mu,\nu,\theta)$ generates a $1$-parameter infinitesimal deformation of $(M,\g, f)$  if and only if the following conditions hold:
\begin{itemize}
  \item[\rm(i)] $(\omega,\mu,\nu,\theta)$ is a $2$-cocycle of $(M,\g, f)$ with coefficients in the adjoint representation;

  \item[\rm(ii)] $(M,\g,\theta)$ is a Hom-associative algebra structure in $\LM$ with multiplication $\omega$ on $\g$ and bimodule structures of $\g$ on $M$ by $\mu, \nu$.
\end{itemize}
\end{thm}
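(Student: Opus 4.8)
The plan is to substitute the $\lambda$-parametrized operations $f_\lambda, \cdot_\lambda$ (the deformed multiplication on $\g$) and the deformed left and right actions directly into each defining axiom of a Hom-Leibniz algebra in $\LM$, and then to compare coefficients of $\lambda^0$, $\lambda^1$ and $\lambda^2$. Since every deformed operation is affine in $\lambda$, each axiom becomes a polynomial identity of degree at most two in $\lambda$, and requiring it to hold for all $\lambda$ is equivalent to the simultaneous vanishing of the three coefficients. The $\lambda^0$ coefficients reproduce exactly the axioms of the undeformed structure $(M,\g,f)$ and therefore hold automatically; the entire argument thus reduces to a bookkeeping of the $\lambda^1$ and $\lambda^2$ coefficients, and the forward and backward implications are simply the two readings of the same coefficient identities.

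First I would enumerate the axioms to be deformed: the left Hom-Leibniz identity \eqref{leftleib} for $\cdot_\lambda$ on $\g$; the multiplicativity $\al(x\cdot_\lambda y)=\al(x)\cdot_\lambda\al(y)$ together with the module-compatibility relations \eqref{mod00}; the bimodule conditions (L1), (L2), (L3) for $M$ over $(\g,\cdot_\lambda)$; and the two equivariance conditions \eqref{cm01}, \eqref{cm02} for $f_\lambda$. Collecting the $\lambda^1$ coefficients, I expect the multiplicativity relation, \eqref{mod00} and the ``Hom'' equivariance \eqref{cm01} to force $\omega,\mu,\nu,\theta$ to be hom-cochains, i.e. to be compatible with $\al$ and $\alm$, so that $(\omega,\mu,\nu,\theta)$ is a legitimate element of the cochain complex. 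The remaining $\lambda^1$ coefficients, coming from \eqref{leftleib}, (L1)--(L3) and \eqref{cm02}, should then yield precisely the six equations characterizing $D_2(\omega,\mu,\nu,\theta)=0$ displayed at the end of Section~4. Under the adjoint representation one has $(V,W,\varphi)=(M,\g,f)$, so $\trr,\trl$ become the given left and right actions and $\varphi$ becomes $f$; matching each $\lambda^1$ term to the corresponding summand of $D_2$ then establishes that the linear term is exactly the $2$-cocycle condition (i).

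Next I would collect the $\lambda^2$ coefficients, where only the terms quadratic in the deformation data survive. From the Hom-Leibniz identity one reads off that $\omega$ alone satisfies \eqref{leftleib}, so $(\g,\omega)$ is a Hom-Leibniz algebra; from (L1)--(L3) with $\omega$ as multiplication one reads off that $\mu,\nu$ make $(M,\alm)$ a bimodule over $(\g,\omega)$; and from \eqref{cm02} one reads off the equivariance $\theta(\mu(x,m))=\omega(x,\theta(m))$ and its right-hand analogue, i.e. that $\theta$ is equivariant for the new operations. Taken together these are exactly the assertion that $(M,\g,\theta)$ is a Hom-Leibniz algebra in $\LM$ with multiplication $\omega$ on $\g$ and bimodule structures given by $\mu,\nu$, which is condition (ii).

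The main obstacle will be the sheer number of axioms and the care required to match the $\lambda^1$ coefficients term-by-term with the explicit $D_2$ formulas: one must verify that, after the adjoint identification, every summand produced by the expansion lands on the correct summand of $D_2$ with the correct sign and with the Hom-map $\al$ inserted in the right slot. Once this dictionary between the linearized axioms and the coboundary $D_2$ is fixed, the $\lambda^0$ part is automatic and the $\lambda^2$ part is routine, so that (i) and (ii) together are equivalent to the vanishing of all coefficients, which is the existence of the infinitesimal deformation.
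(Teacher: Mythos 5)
Your proposal is correct and follows essentially the same route as the paper: substitute the $\lambda$-affine deformed operations into each axiom, observe that the $\lambda^0$ part is automatic, and read off the $2$-cocycle condition from the $\lambda^1$ coefficients and the new Hom-Leibniz structure in $\LM$ from the $\lambda^2$ coefficients. The only (harmless) difference is that you additionally flag the verification that $\omega,\mu,\nu,\theta$ are hom-cochains compatible with $\al$ and $\alm$, which the paper leaves implicit.
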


\begin{proof}
If $(M_\lambda, \g\dlam, f\dlam)$  is a Hom-associative algebra in $\LM$, then by Definition \ref{defi:Lie 2}, $f\dlam$ is a bimodule map.
Thus we have
\begin{eqnarray*}
&&\dM\dlam (x\cdlam m)-x\cdlam \dM\dlam (m)\\
&=&(\dM+\lambda\theta )( x\cdot m + \lambda\mu(x, m))-x(\dM (m)+\lambda\theta (m))-\lambda\omega (x, \dM (m)+\lambda\theta (m))\\
&=&\dM(x\cdot m)+\lambda\Big(\theta( x\cdot m )+\dM\mu(x,m)\Big)+\lambda^2\theta \omega (x,m)\\
&&-x f(m)-\lambda\Big(\omega(x, f(m)+x\theta (m)\Big)-\lambda^2\mu(x,\theta (m))\\
&=&0,
\end{eqnarray*}
%\comment{You see you made many mistake in $\omega $ and $\mu$, here and through the paper}
which implies that
\begin{eqnarray}
\label{eq:2-coc01}\theta ( x\cdot m )-x\theta (m)+\dM \mu (x,m)-\omega (x, \dM (m))&=&0,\\
\label{eq:2-coc02}\theta \mu(x,m)-\omega (x,\theta (m))&=&0.
\end{eqnarray}
Similar computation shows that
\begin{eqnarray}
\label{eq:2-coc03}\theta (m\cdot x)-\theta (m)x+\dM \nu (m,x)-\omega (\dM (m), x)&=&0,\\
\label{eq:2-coc04}\theta \nu(m,x)-\omega (\theta (m),x)&=&0.
\end{eqnarray}

Since $\g\dlam$ is a Hom-associative algebra, we have
\begin{eqnarray*}
&&(x\cdlam y)\cdlam \al(z)-\al(x)\cdlam(y\cdlam z)\\
&=&(xy+\lambda\omega (x, y))\cdlam  \al(z)-\al(x)\cdlam(yz+\lam\omega(y, z))\\
&=&(xy)\al(z)-\al(x)(yz)+\lambda\Big(\omega (xy,\al(z))+\omega (x, y)\al(z)\Big)\\
&&+\lambda^2\Big(\omega(\omega (x,y),\al(z))-\omega(\al(x),\omega (y,z))\Big)\\
&=&0,
\end{eqnarray*}
which implies that
\begin{eqnarray}
\label{eq:2-cocycle1'}\omega (xy,\al(z))+\omega (x, y)\al(z)-\omega (\al(x), yz)-\al(x)\omega (y,z)&=&0,\\
\label{eq:2-cocycle1''}\omega (\omega (x,y),\al(z))-\omega(\al(x),\omega (y,z))&=&0.
\end{eqnarray}

Since $M_\lambda$ is a left module of $\g\dlam$, we have
\begin{eqnarray*}
&&(x\cdlam y)\cdlam\alm(m)-\al(x)\cdlam(y\cdlam m)\\
%&=&[xy+\lambda\omega (x, y), \alm(m)]\dlam+c.p.\\
%&=&[xy,\alm(m)]\dlam+c.p.+\lambda[\omega (x, y),\alm(m)]\dlam+c.p.\\
&=&(xy)\cdot\alm(m)-\al(x)\cdot (y\cdot m) \\
\nonumber&&+\lambda\Big( \mu(xy,\alm(m))+\omega(x,y)\cdot \alm(m)\\
&&-\al(x)\mu(y,m)-\mu(\al(x), y\cdot m)\Big)\\
&&+\lambda^2\Big(\mu(\omega (x,y),\alm(m))-\mu(\al(x), \mu(y, m))\Big)\\
&=&0,
\end{eqnarray*}
which implies that
\begin{eqnarray}
\nonumber&&\mu(xy,\alm(m))+\omega(x,y)\cdot \alm(m)\\
\label{eq:2-coc31}
&&-\al(x)\mu(y,m)-\mu(\al(x), y\cdot m)=0,\\
\label{eq:2-coc32}
&&\mu(\omega (x,y),\alm(m))-\mu(\al(x), \mu(y, m))=0.
\end{eqnarray}

Similar computation shows that
\begin{eqnarray}
\nonumber&&\nu(\alm(m), xy)+ \alm(m)\cdot\omega(x,y)\\
\label{eq:2-coc33}
&&-\nu(m, x)\al(y)-\nu(m\cdot x, \al(y))=0,\\
\label{eq:2-coc34}
&&\nu(\alm(m),\omega (x,y))-\nu(\nu(m,x), \al(y))=0.
\end{eqnarray}

By \eqref{eq:2-coc01},  \eqref{eq:2-coc31} and \eqref{eq:2-coc33}, we find that $(\omega,\mu,\nu,\theta)$ is a 2-cocycle of $(M,\g, f)$ with the coefficients in the adjoint representation.
Furthermore, by \eqref{eq:2-coc02}, \eqref{eq:2-coc32} and \eqref{eq:2-coc34}, $(M,\g, \theta)$ with multiplication $\omega$ is a Hom-associative algebra in $\LM$.
\end{proof}

\medskip

Now we introduce the notion of Nijenhuis operators which gives trivial deformations.

Let $(M,\g, f)$ be a Hom-associative algebra in $\LM$ and $N=(N_0,N_1)$ be a pair of linear maps $N_0:\frkg \to \frkg $ and $N_1: M\to  M $
such that $\dM\circ N_1=N_0\circ \dM$. Define an exact 2-cochain
 $$(\omega,\mu,\nu,\theta)=D(N_0,N_1)$$
 by differential $D$ discussed above, i.e.,
  \begin{eqnarray*}
   \theta(m)&=&\dM\circ N_1(m)-N_0\circ \dM(m),\\
\omega(x, y)=x\cdot_Ny&=&N_0(x)y + x N_0(y) - N_0(xy),\\
\mu(x,m)=x\cdot_N m&=&N_0(x) m + x\cdot N_1(m) - N_1(x\cdot m),\\
\nu(m,x)=m\cdot_N x&=&N_1(m) x + m\cdot N_0(x) - N_1(m\cdot x).
 \end{eqnarray*}

 \begin{defi}\label{defi:Nijenhuis}
  A pair of linear maps $N=(N_0,N_1)$ is called a Nijenhuis operator if for all $x,y\in\g $ and $m\in M $, the following conditions are satisfied:
   \begin{itemize}
     \item[(i)] $\Imm (\dM\circ N_1-N_0\circ \dM)\in \Ker N_0,$
     \item[(ii)] $N_0(x\cdot_N y)=N_0(x)N_0(y),$
     \item[(iii)] $N_1(x\cdot_N m)=N_0(x)\cdot N_1(m),$
     \item[(iv)] $N_1(m\cdot_N x)=N_1(m)\cdot N_0(x).$
   \end{itemize}
 \end{defi}

\begin{defi}
A deformation is said to be trivial if there exists a pair of  linear maps $N_0:\frkg \to \frkg, ~N_1:  M \to  M $,
such that $(T_0,T_1)$ is a morphism from $(M\dlam,\g\dlam,\dM\dlam)$ to $(M,\g,\dM) $, where $T_0 = \id + \lambda N_0$,
$T_1 = \id + \lambda N_1$.
\end{defi}

Note that $(T_0,T_1)$ is a morphism means that
\begin{eqnarray}
\label{eq:trivialdeform0} \dM \circ T_1(m)&=&T_0\circ \dM\dlam(m),\\
\label{eq:trivialdeform1} T_0(x\cdot\dlam y)&=&T_0(x)\, T_0(y),\\
\label{eq:trivialdeform2} T_1(x\cdot\dlam m)&=&T_0(x)\cdot T_1(m),\\
\label{eq:trivialdeform3} T_1(m\cdot\dlam x)&=&T_1(m)\cdot T_0(x).
\end{eqnarray}
Now we consider what conditions that $N=(N_0,N_1)$ should satisfy.
From \eqref{eq:trivialdeform0}, we have
$$\dM (m)+\lambda \dM N_1(m)=(\id + \lambda N_0)(\dM (m)+\lambda\theta (m))=\dM (m)+\lambda N_0(\dM (m))+\lambda \theta (m)+\lambda^2N_0\theta (m).$$
Thus, we have
$$\theta (m)=(\dM N_1-N_0\dM) (m),$$
$$N_0\theta (m)=0.$$
It follows that $N$ must satisfy the following condition:
\begin{align}\label{Nijenhuis0}
N_0(\dM N_1-N_0\dM)(m)=0.
\end{align}

For \eqref{eq:trivialdeform1}, the left hand side is equal to
$$xy+\lambda N_0(xy)+\lambda\omega (x,y)+\lambda^2 N_0\omega (x, y),$$
and the right hand side is equal to \begin{eqnarray*}
xy+\lambda N_0(x) y+\lambda x N_0(y)+\lambda^2 N_0(x)\, N_0(y).
\end{eqnarray*}
Thus, \eqref{eq:trivialdeform1} is equivalent to
$$\omega (x, y) =N_0(x) y + x N_0(y) - N_0(xy),$$
$$N_0\omega (x, y) = N_0(x) N_0(y). $$
It follows that $N$ must satisfy the following condition:
\begin{align}\label{Nijenhuis1}
N_0(x) N_0(y) - N_0(N_0(x) y) - N_0(x N_0(y)) + N_0^2(xy)=0.
\end{align}

For \eqref{eq:trivialdeform2},  the left hand side is equal to
$$x\cdot m+\lambda \mu(x,m) +\lambda N_1(x\cdot m)+\lambda^2 N_1\mu(x,m),$$
and the right hand side is equal to
$$x\cdot m+\lambda N_0(x)\cdot m+\lambda x\cdot N_1(m)+\lambda^2 N_0(x)\cdot N_1(m).$$
Thus, \eqref{eq:trivialdeform2} is equivalent to
$$\nu(x, m) = N_0(x)\cdot  m + x\cdot N_1(m) - N_1(x\cdot m),$$
$$N_1\nu(x, m) = N_0(x)\cdot N_1(m)+N_2(x,\theta (m)). $$
It follows that $N$ must satisfy the following condition:
\begin{align}\label{Nijenhuis2}
N_0(x)\cdot N_1(m)- N_1(N_0(x)\cdot m) - N_1(x\cdot N_1(m)) + N_1^2(x\cdot m)=0.
\end{align}

Similarly, form  \eqref{eq:trivialdeform3} we obtain
\begin{align}\label{Nijenhuis3}
N_1(m)\cdot N_0(x)- N_1(m\cdot N_0(x)) - N_1(N_1(m)\cdot x) + N_1^2(m\cdot x)=0.
\end{align}

A Nijenhuis operator $(N_0,N_1)$ could give a trivial deformation by setting
\begin{equation}\label{eq:omega exact}
(\omega,\mu,\nu,\theta)=D(N_0,N_1).
\end{equation}

\begin{thm}\label{thm:Nijenhuis}
Let $N=(N_0,N_1)$ be a Nijenhuis operator. Then a deformation
can be obtained by putting
\begin{equation}\label{Nijenhuis}
\left\{\begin{array}{rll}
\theta (m) &=&(\dM N_1-N_0\dM) (m),\\
\omega (x, y)&=&N_0(x) y + x N_0(y) - N_0(x)y,\\
\mu(x,m)&=&N_0(x)\cdot m + x\cdot N_1(m) - N_1(x\cdot m)\\
\nu(m,x)&=&m\cdot N_0(x) +  N_1(m)\cdot x - N_1(m\cdot x).
\end{array}\right.
\end{equation}
Furthermore, this deformation is trivial.
\end{thm}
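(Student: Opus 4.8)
The plan is to obtain both assertions---that \eqref{Nijenhuis} defines a deformation and that this deformation is trivial---from Theorem~\ref{thm:deformation} together with the $\lambda$-expansion of the morphism conditions \eqref{eq:trivialdeform0}--\eqref{eq:trivialdeform3} already carried out before the statement. At the outset I would record that $(N_0,N_1)$ is taken to be a $1$-hom-cochain, so that $N_0\al=\al N_0$ and $N_1\alm=\alm N_1$; this is what keeps everything inside $\LM$. First I observe that the four maps in \eqref{Nijenhuis} are exactly the components of the exact $2$-cochain $D(N_0,N_1)$ read off from the formulas for $D_1$ in Section~4. Since $D^2=0$, and in particular $D_2\circ D_1=0$, we get $D_2(\omega,\mu,\nu,\theta)=D_2D_1(N_0,N_1)=0$ for free, so $(\omega,\mu,\nu,\theta)$ is a $2$-cocycle with coefficients in the adjoint representation. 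This settles condition~(i) of Theorem~\ref{thm:deformation}.

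The real content is condition~(ii), namely that $(M,\g,\theta)$, with multiplication $\omega$ on $\g$ and bimodule maps $\mu,\nu$, is again a Hom-Leibniz algebra in $\LM$. I would derive this directly from the four defining identities of a Nijenhuis operator. Identity~(ii) of Definition~\ref{defi:Nijenhuis}, i.e. \eqref{Nijenhuis1}, is the Nijenhuis-torsion relation $N_0(x\cdot_N y)=N_0(x)N_0(y)$, and the standard deformation-along-a-Nijenhuis-operator computation converts it into the left Hom-Leibniz identity \eqref{eq:2-cocycle1''} for $\omega$; identities~(iii) and~(iv), i.e. \eqref{Nijenhuis2} and \eqref{Nijenhuis3}, likewise yield the two quadratic bimodule axioms \eqref{eq:2-coc32} and \eqref{eq:2-coc34} for $\mu$ and $\nu$; and identity~(i), i.e. \eqref{Nijenhuis0}, produces the equivariance relations \eqref{eq:2-coc02} and \eqref{eq:2-coc04} for $\theta$. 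With (i) and (ii) in hand, Theorem~\ref{thm:deformation} yields that $(\omega,\mu,\nu,\theta)$ generates a genuine $1$-parameter infinitesimal deformation $(M_\lambda,\g_\lambda,f_\lambda)$.

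For triviality I would argue conceptually, which also re-proves the previous paragraph. Put $T_0=\id+\lambda N_0$ and $T_1=\id+\lambda N_1$. The computation preceding the theorem shows that each of \eqref{eq:trivialdeform0}--\eqref{eq:trivialdeform3} splits, after collecting powers of $\lambda$, into a $\lambda^1$-part---which is precisely one of the defining formulas in \eqref{Nijenhuis}---and a $\lambda^2$-part---which is precisely one of \eqref{Nijenhuis0}--\eqref{Nijenhuis3}. Since $N$ is a Nijenhuis operator all four $\lambda^2$-parts hold, and the $\lambda^1$-parts hold by our choice \eqref{Nijenhuis}; hence $(T_0,T_1)$ satisfies \eqref{eq:trivialdeform0}--\eqref{eq:trivialdeform3} and is a morphism from $(M_\lambda,\g_\lambda,f_\lambda)$ onto $(M,\g,f)$ in $\LM$ (here the intertwining $N_0\al=\al N_0$, $N_1\alm=\alm N_1$ is used). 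Because $T_0,T_1$ are invertible over $\K[[\lambda]]$, with inverses $\sum_{k\ge0}(-\lambda N_0)^k$ and $\sum_{k\ge0}(-\lambda N_1)^k$, the deformed operations are conjugates of the undeformed ones, e.g. $x\cdot_\lambda y=T_0^{-1}\!\big(T_0(x)\,T_0(y)\big)$; this realizes $(M_\lambda,\g_\lambda,f_\lambda)$ as isomorphic to $(M,\g,f)$, confirming once more that we have a deformation and showing it is trivial.

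The step I expect to be the main obstacle is condition~(ii) of Theorem~\ref{thm:deformation}: turning the quadratic identities \eqref{Nijenhuis1}--\eqref{Nijenhuis3} into the Hom-Leibniz identity for $\omega$ and the bimodule axioms for $\mu,\nu$, while correctly threading in the multiplicativity of $\al$ and $\alm$. These are the genuinely algebraic manipulations, and the bookkeeping between $N_0,N_1$ and the structure maps is where errors most easily creep in; by contrast the equivariance relations \eqref{eq:2-coc02} and \eqref{eq:2-coc04} coming from \eqref{Nijenhuis0} are comparatively easy, and both the cocycle property and the triviality are essentially formal once the morphism expansion before the theorem is granted.
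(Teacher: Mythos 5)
Your proposal is correct and, for the first assertion, follows the same route as the paper: both identify $(\omega,\mu,\nu,\theta)$ as the exact cochain $D(N_0,N_1)$, get the $2$-cocycle condition from $D^2=0$, check condition (ii) of Theorem~\ref{thm:deformation}, and conclude via that theorem. Where you genuinely diverge is on the two points the paper leaves unargued. First, the paper dispatches condition (ii) with ``it is easy to check,'' whereas you both map each Nijenhuis identity \eqref{Nijenhuis0}--\eqref{Nijenhuis3} to its target quadratic axiom and, more usefully, give a transport-of-structure argument: since $T_0=\id+\lambda N_0$ and $T_1=\id+\lambda N_1$ intertwine the structure maps, are invertible over formal power series, and satisfy \eqref{eq:trivialdeform0}--\eqref{eq:trivialdeform3} (the $\lambda^1$-parts being \eqref{Nijenhuis} and the $\lambda^2$-parts being the Nijenhuis conditions), the deformed operations are conjugates of the undeformed ones and all axioms follow by applying $T_0^{-1}$, $T_1^{-1}$ to the original identities. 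This is cleaner than the direct algebraic verification and avoids the bookkeeping you rightly flag as error-prone. Second, the paper's proof never addresses the claim ``furthermore, this deformation is trivial'' at all; your morphism argument supplies exactly that missing step, since the paper's definition of triviality is precisely the existence of such a $(T_0,T_1)$. One small caution: your assertion that \eqref{Nijenhuis0} alone yields \eqref{eq:2-coc02} and \eqref{eq:2-coc04} is an oversimplification (those identities also need \eqref{Nijenhuis2}, \eqref{Nijenhuis3} and the equivariance of $\dM$), but this is moot given that your third paragraph re-derives everything by conjugation; also note that the paper's preamble imposes $\dM\circ N_1=N_0\circ\dM$, which would force $\theta=0$ and is evidently a slip --- your reading, using only Definition~\ref{defi:Nijenhuis}(i), is the consistent one.
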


\begin{proof} Since $(\omega,\mu,\nu,\theta)=D(N_0,N_1)$, it is obvious that $(\omega,\mu,\nu,\theta)$ is a 2-cocycle.
It is easy to check that $(\omega,\mu,\nu,\theta)$ defines a Hom-associative algebra $(M,\g,\theta)$ in $\LM$ structure.
Thus by Theorem \ref{thm:deformation}, $(\omega,\mu,\nu,\theta)$ generates a deformation.
\end{proof}

\emptycomment{
\begin{eqnarray*}
\theta \mu(x,m)&=&(\dM N_1-N_0\dM)([N_0(x), m] + [x, N_1(m)] - N_1(x\cdot m))\\
&=&\dM N_1([N_0(x), m] + [x, N_1(m)] - N_1(x\cdot m))-N_0\dM([N_0(x), m] + [x, N_1(m)] - N_1(x\cdot m))\\
&=&\dM N_1([N_0(x), m] + \dM N_1[x, N_1(m)] - \dM N_1 N_1(x\cdot m))\\
&&-N_0[N_0(x), \dM(m)] - N_0[x, \dM N_1(m)] + N_0 \dM N_1(x\cdot m))\\
\end{eqnarray*}
\begin{eqnarray*}
\omega (x,\theta (m))&=&[N_0(x), (\dM N_1-N_0\dM) (m)] + [x, N_0(\dM N_1-N_0\dM) (m)] - N_0[x, (\dM N_1-N_0\dM) (m)]\\
&=&[N_0(x), \dM N_1 (m)]-[N_0(x),N_0\dM (m)] - N_0[x, \dM N_1 (m)]+ N_0[x, N_0\dM (m)]\\
\end{eqnarray*}
}

%\subsection{Formal deformations}
Now we consider the general formal deformations.
Let $(M,\g, f)$ be a Hom-associative algebra in $\LM$ and $\theta_i: M \to\g, ~\omega_i:A\otimes A\to \g , ~\mu_i:\g \otimes M \to M, ~\nu_i: M \otimes A \to M, i\geqslant 0$ be linear maps where $\theta_0=f,  ~\omega_0(x,y)=xy,  ~\mu_0(x,m)=x\cdot m,  ~\nu_0(m,x)=m\cdot x$.
Consider a $\lambda$-parametrized family of linear operations:
\begin{eqnarray*}
\dM\dlam (m)&\triangleq&\dM (m)+\lambda\theta_1 (m)+\lambda^2\theta_1 (m)+\cdots,\\
\omega\dlam(x,y)={x\cdlam y}&\triangleq& xy+\lambda\omega_1 (x, y)+\lambda^2\omega_2 (x, y)+\cdots,\\
\mu\dlam(x,m)={x\cdot\dlam m}&\triangleq& x\cdot m+ \lambda\mu_1(x, m)+ \lambda^2\mu_2(x, m)+\cdots,\\
\nu\dlam(m,x)={m\cdot\dlam x}&\triangleq&m\cdot x+ \lambda\nu_1(m,x)+ \lambda^2\nu_2(m,x)+\cdots.
 \end{eqnarray*}
In order that  $(M\dlam, \g\dlam, f\dlam)$ forms a Hom-associative algebra in $\LM$, we must have
\begin{eqnarray}
&&\label{eq:formal00}f\dlam\nu\dlam (x,m)=\omega\dlam(x,f\dlam(m)),\\
&&\label{eq:formal01}\omega\dlam (\omega\dlam(x,y), \al(z))=\omega(\al(x), \omega\dlam(y,z)),\\
&&\label{eq:formal02}\mu\dlam (\omega\dlam(x,y), \alm(m))=\mu\dlam (\al(x), \mu\dlam(y,m)),\\
&&\label{eq:formal03}\nu\dlam (\alm(m),\omega\dlam(x,y))=\nu\dlam (\nu\dlam(m,x), \alm(y)),\\
&&\label{eq:formal04}\mu\dlam (\al(x), \nu\dlam(m,y))=\nu\dlam (\mu\dlam(x,m), \al(y)).
\end{eqnarray}
which implies that
\begin{eqnarray}
&&\label{eq:formal0}\sum_{i+j=k}f_i\nu_j(x,m)=\omega_i(x,f_j(m)),\\
&&\label{eq:formal1}\sum_{i+j=k}\omega_i (\omega_j(x,y),\al(z))=\sum_{i+j=k}\omega_i (\al(x),\omega_j(y,z)),\\
&&\label{eq:formal2}\sum_{i+j=k}\mu_i (\omega_j(x,y), \alm(m))=\sum_{i+j=k}\mu_i (\al(x), \mu_j(y,m)),\\
&&\label{eq:formal2}\sum_{i+j=k}\nu_i (\alm(m), \omega_j(x,y))=\sum_{i+j=k}\nu_i (\nu_j(m,x), \al(y)),\\
&&\label{eq:formal3}\sum_{i+j=k}\mu_i (\al(x), \nu_j(m,y))=\sum_{i+j=k}\nu_i (\mu_j(x,m), \al(y)).
\end{eqnarray}

For $k=0$, conditions \eqref{eq:formal00}--\eqref{eq:formal04} are equivalent to $(\omega_0,\mu_0, \nu_0,\theta_0)$ is a Hom-associative algebra in $\LM$.

For $k=1$, these conditions  are equivalent to
\begin{eqnarray}
&&\theta (x\cdot m)+f \nu_1 (x,m)=\omega_1 (x, f(m))+x\theta (m),\\
&&\omega_1(xy, \al(z))+\omega_1 (x, y)\al(z)=\omega_1(\al(x), yz)+\al(x)\omega_1 (y,z),\\
&&\mu_1(xy, \alm(m))+\omega_1 (x, y) \cdot\alm(m)=\al(x)\mu_1(y,m)+\mu_1(\al(x), y\cdot m),\\
&&\nu_1(\alm(m), xy)+ \alm(m)\cdot\omega_1(x,y)=\nu_1(m, x)\al(y)+\nu_1(m\cdot x, \al(y)),\\
&&\mu_1(\al(x), m\cdot y)+\al(x)\cdot \nu_1(m,y))=\nu_1(x\cdot m, \al(y))+\mu_1(x,m)\cdot\al(y).
\end{eqnarray}
Thus $(\omega_1,\mu_1,\nu_1,\theta_1)\in C^{2}((M,\g, f),(M,\g, f))$ is a 2-cocycle.

\begin{defi}
The 2-cochain $(\omega_1,\mu_1,\nu_1,\theta_1)$ is called the infinitesimal of $(\omega\dlam,\mu\dlam,\nu\dlam, f\dlam)$. More generally, if $(\omega_i,\mu_i, \nu_i, f_i)=0$ for $1\leqslant i\leqslant (n -1)$,
and $(\omega_n,\mu_n, \nu_n, f_n)$ is a non-zero cochain in $C^{2}((M,\g, f),(M,\g, f))$, then $(\omega_n,\mu_n,\nu_n,f_n)$ is called the $n$-infinitesimal of
the deformation $(\omega\dlam,\mu\dlam,\nu\dlam,  f\dlam)$.
\end{defi}

Let $(\omega\dlam,\mu\dlam,\nu\dlam,f\dlam)$ and $(\omega'\dlam,\mu'\dlam,\nu'\dlam,f'\dlam)$ be two deformation.
We say that they are equivalent if there exists a formal isomorphism
$(\Phi\dlam,\Psi\dlam):(M'\dlam, \g'\dlam, f'\dlam)\to (M\dlam, \g\dlam, f\dlam)$ such that
$\omega'\dlam(x,y)=\Psi^{-1}\dlam\omega\dlam(\Psi\dlam(x),\Psi\dlam(y))$.

A deformation $(\omega\dlam,\mu\dlam,\nu\dlam,f\dlam)$ is said to be the trivial deformation if it is  equivalent to $(\omega_0,\mu_0,\nu_0, \theta_0)$.

\begin{thm}\label{thm-deform}
Let $(\omega\dlam,\mu\dlam,\nu\dlam,f\dlam)$ and $(\omega'\dlam,\mu'\dlam,\nu'\dlam, f'\dlam)$ be equivalent deformations of  $(M,\g, f)$,
then the first-order terms of them belong to the same cohomology class in the second cohomology group $H^2((M,\g, f),(M,\g, f))$.
\end{thm}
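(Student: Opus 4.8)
The plan is to show that the two first-order terms differ by an exact $2$-cochain, namely $D_1$ applied to the linear parts of the equivalence. By the $k=1$ case of the formal deformation conditions computed just before the statement, each of $(\omega_1,\mu_1,\nu_1,\theta_1)$ and $(\omega'_1,\mu'_1,\nu'_1,\theta'_1)$ is already a $2$-cocycle of $(M,\g,f)$ with coefficients in the adjoint representation, so the only task is to compare them modulo $B^2((M,\g,f),(M,\g,f))$.

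First I would write the formal isomorphism in components. A formal isomorphism is in particular an (invertible) morphism $(\Phi\dlam,\Psi\dlam):(M'\dlam,\g'\dlam,f'\dlam)\to(M\dlam,\g\dlam,f\dlam)$, so by the notion of morphism of Hom-Leibniz algebras in $\LM$ (end of Section 3) the map $\Psi\dlam$ acts on $\g$ and $\Phi\dlam$ acts on $M$. I would set $\Psi\dlam=\id+\sum_{i\geq 1}\lambda^i\Psi_i$ and $\Phi\dlam=\id+\sum_{i\geq 1}\lambda^i\Phi_i$, whose formal inverses have first-order parts $-\Psi_1$ and $-\Phi_1$. Transported to the deformed structures, the morphism axioms read
\begin{eqnarray*}
f\dlam\circ\Phi\dlam &=& \Psi\dlam\circ f'\dlam,\\
\omega'\dlam(x,y) &=& \Psi^{-1}\dlam\,\omega\dlam(\Psi\dlam x,\Psi\dlam y),\\
\mu'\dlam(x,m) &=& \Phi^{-1}\dlam\,\mu\dlam(\Psi\dlam x,\Phi\dlam m),\\
\nu'\dlam(m,x) &=& \Phi^{-1}\dlam\,\nu\dlam(\Phi\dlam m,\Psi\dlam x),
\end{eqnarray*}
the second being exactly the condition imposed in the definition of equivalent deformations.

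Next I would expand each identity to order $\lambda$, using $\omega_0(x,y)=xy$, $\mu_0(x,m)=x\cdot m$, $\nu_0(m,x)=m\cdot x$ and $\theta_0=f$, and collect the coefficient of $\lambda$. This would yield
\begin{eqnarray*}
\theta'_1(m)-\theta_1(m) &=& f(\Phi_1 m)-\Psi_1(f(m)),\\
\omega'_1(x,y)-\omega_1(x,y) &=& (\Psi_1 x)y+x(\Psi_1 y)-\Psi_1(xy),\\
\mu'_1(x,m)-\mu_1(x,m) &=& (\Psi_1 x)\cdot m+x\cdot(\Phi_1 m)-\Phi_1(x\cdot m),\\
\nu'_1(m,x)-\nu_1(m,x) &=& (\Phi_1 m)\cdot x+m\cdot(\Psi_1 x)-\Phi_1(m\cdot x).
\end{eqnarray*}
Comparing these four right-hand sides with the explicit formulas for $D_1$ recorded in Section 4, taking $N_0=\Psi_1$, $N_1=\Phi_1$ and $\varphi=f$ in the adjoint representation, I would recognise them as precisely the four components of $D_1(\Psi_1,\Phi_1)$. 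Hence
$$(\omega'_1,\mu'_1,\nu'_1,\theta'_1)-(\omega_1,\mu_1,\nu_1,\theta_1)=D_1(\Psi_1,\Phi_1)\in B^2((M,\g,f),(M,\g,f)),$$
so the two first-order terms represent the same class in $H^2((M,\g,f),(M,\g,f))$.

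The main obstacle I anticipate is bookkeeping rather than anything conceptual: one must keep straight that $\Phi\dlam$ acts on $M$ and $\Psi\dlam$ on $\g$, expand the formal inverses correctly so that the first-order part of $\Psi^{-1}\dlam$ is $-\Psi_1$ and of $\Phi^{-1}\dlam$ is $-\Phi_1$, and then verify that all four first-order differences land exactly on the corresponding slots of $D_1$, with the correct signs and the correct order of the arguments in the $\theta$-, $\mu$- and $\nu$-components.
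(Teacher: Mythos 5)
Your proposal is correct and follows essentially the same route as the paper's own proof: write the equivalence as $\id$ plus higher-order terms, expand the morphism conditions to the coefficient of $\lambda$, and identify the difference of the first-order terms with $D_1$ applied to the linear parts of the equivalence. The paper compresses the expansion into a single sentence, whereas you carry it out explicitly (and more carefully, keeping straight which of $\Phi_1,\Psi_1$ acts on $M$ and which on $\g$), but the underlying argument is identical.
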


\begin{proof}
Let $(\Phi\dlam,\Psi\dlam):(M\dlam, \g\dlam, f\dlam)\to (M'\dlam, \g'\dlam, f'\dlam)$ be an equivalence
where $\Phi\dlam=\id_M+\lambda\phi_1+\lambda^2\phi_2+\cdots$ and $\Psi\dlam=\id_M+\lambda\psi_1+\lambda^2\psi_2+\cdots$.
Then we have
$\Psi\dlam\omega'\dlam(x,y)=\omega\dlam(\Psi\dlam(x),\Psi\dlam(y))$,
$\Psi\dlam\nu'\dlam(x,m)=\nu\dlam(\Phi\dlam(x),\Psi\dlam(m))$.
Then by expanding the above equality, we have $(\theta_1,\omega_1,\mu_1,\nu_1)-(\theta'_1,\omega'_1,\mu'_1,\nu'_1)=D(\phi_1,\psi_1)$. Thus $(\theta_1,\omega_1,\mu_1,\nu_1)$ and $(\theta'_1,\omega'_1,\mu'_1,\nu'_1)$ are belong to the same cohomology class in the second cohomology group. The proof is finished.
\end{proof}

A Hom-associative algebra $(M,\g, f)$ in $\LM$ is called rigid if every deformation $(\omega\dlam,\mu\dlam,\nu\dlam,f\dlam)$ is equivalent to the trivial deformation.

\begin{thm}
If $H^2((M,\g, f),(M,\g, f))=0$, then $(M,\g, f)$ is rigid.
\end{thm}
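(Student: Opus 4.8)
The plan is to use the classical ``killing the lowest-order term'' argument of deformation theory: the hypothesis $H^2=0$ lets me remove, by a formal change of coordinates, the first nonvanishing term of any deformation, thereby pushing it to ever higher order and, in the $\lam$-adic limit, to the trivial one. Concretely, I would start from an arbitrary deformation $(\omega\dlam,\mu\dlam,\nu\dlam,f\dlam)$ of $(M,\g,f)$. From the order-one analysis preceding Theorem~\ref{thm:deformation}, its infinitesimal $(\omega_1,\mu_1,\nu_1,\theta_1)$ is a $2$-cocycle in $C^2((M,\g,f),(M,\g,f))$; since $H^2((M,\g,f),(M,\g,f))=0$, it is a coboundary, so there is a $1$-cochain $(N_0,N_1)\in\Hom(\g,\g)\oplus\Hom(M,M)$ with $(\omega_1,\mu_1,\nu_1,\theta_1)=D(N_0,N_1)$.

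Next I would transport the deformation through the formal isomorphism $\Phi\dlam=\id+\lam N_0$, $\Psi\dlam=\id+\lam N_1$. By the computation underlying Theorem~\ref{thm-deform}, the effect on the order-one part is exactly the shift $(\omega_1,\mu_1,\nu_1,\theta_1)\mapsto(\omega_1,\mu_1,\nu_1,\theta_1)-D(N_0,N_1)$, which with the above choice of $(N_0,N_1)$ is zero. This yields an equivalent deformation $(\omega'\dlam,\mu'\dlam,\nu'\dlam,f'\dlam)$ whose terms of order $\lam$ all vanish, i.e.\ one beginning at order $\lam^2$.

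Then I would iterate. If a deformation has all terms below order $n$ equal to zero, then in the order-$n$ part of the deformation equations \eqref{eq:formal0}--\eqref{eq:formal3} every mixed sum $\sum_{i+j=n}$ with $0<i,j<n$ vanishes, so these equations collapse to precisely the $2$-cocycle conditions on the $n$-infinitesimal $(\omega_n,\mu_n,\nu_n,\theta_n)$. Invoking $H^2=0$ once more writes it as $D(N_0^{(n)},N_1^{(n)})$, and transporting by $\Phi\dlam=\id+\lam^n N_0^{(n)}$, $\Psi\dlam=\id+\lam^n N_1^{(n)}$ removes the order-$n$ term while leaving all lower orders unchanged. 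Composing these isomorphisms over $n=1,2,3,\dots$ gives a well-defined formal isomorphism in the $\lam$-adic topology, since the coefficient of each fixed power of $\lam$ stabilizes after finitely many steps, and it carries the original deformation to $(\omega_0,\mu_0,\nu_0,\theta_0)$. Hence every deformation of $(M,\g,f)$ is equivalent to the trivial one, so $(M,\g,f)$ is rigid.

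The step I expect to be the main obstacle is the bookkeeping in this induction: one must check that transporting a deformation by $\id+\lam^n N^{(n)}$ genuinely fixes all terms of order below $n$ and alters the order-$n$ term of each of the four structure maps $\omega,\mu,\nu,\theta$ exactly by the corresponding component of $D(N_0^{(n)},N_1^{(n)})$, and then that the infinite composite converges cleanly as a formal power series. The cocycle-collapse of the order-$n$ equations is routine once the $k=1$ case is in hand, but verifying it simultaneously for all the relations \eqref{eq:formal0}--\eqref{eq:formal3} requires care.
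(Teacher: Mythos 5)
Your proposal is correct and follows the same strategy as the paper's own proof: use $H^2((M,\g,f),(M,\g,f))=0$ to realize the lowest-order term of the deformation as a coboundary $D(N_0,N_1)$ and trivialize it via the formal isomorphism $\id+\lambda^n N$. In fact your write-up is more complete than the paper's, which records only the first step of this argument and leaves the induction on the order and the convergence of the composite isomorphism implicit.
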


\begin{proof}
Let $(\omega\dlam,\mu\dlam,\nu\dlam,f\dlam)$ be a  deformation of $(M,\g, f)$.
It follows from above Theorem \ref{thm-deform} that  $D(\omega\dlam,\mu\dlam,\nu\dlam,f\dlam)=0$, that is $(\omega\dlam,\mu\dlam,\nu\dlam,f\dlam)\in Z^2((M,\g, f),(M,\g, f))$. Now assume $H^2((M,\g, f))=0$, we can find $(N_0,N_1)$ such that $(\omega\dlam,\mu\dlam,\nu\dlam,f\dlam)=D(N_0,N_1)$.
Thus $(\omega\dlam,\mu\dlam,\nu\dlam,f\dlam)$ is equivalent to the trivial deformation.
This proof is completed.
\end{proof}

\subsection{Abelian extensions}

\begin{defi}
Let $(M,\g, f)$ be a Hom-associative algebra in $\LM$. An extension of $(M,\g, f)$ is a
short exact sequence such that $\mathrm{Im}(i_0)=\mathrm{Ker}(p_0)$ and $\mathrm{Im}(i_1)=\mathrm{Ker}(p_1)$ in the following commutative diagram
\begin{equation}\label{eq:ext1}
\CD
  0 @>0>>  \frkh @>i_1>> \widehat{M} @>p_1>>  M  @>0>> 0 \\
  @V 0 VV @V \varphi VV @V \widehat{\dM} VV @V\dM VV @V0VV  \\
  0 @>0>> W @>i_0>> \widehat{\g} @>p_0>> \g @>0>>0
\endCD
\end{equation}
where $(V,W,\varphi)$ is a Hom-associative algebra in $\LM$.
\end{defi}

We call $(\widehat M,\widehat\g, \widehat f)$ an extension of $(M,\g, f)$ by
$(V,W,\varphi)$, and denote it by $\widehat{\E}$.
It is called an abelian extension if $(V,W,\varphi)$ is an abelian Hom-associative algebra in $\LM$ (this means that the multiplication on $W$ is zero, the bimodule of $W$ on $V$ is trivial).

A splitting $\sigma=(\sigma_0,\sigma_1):(M,\g, f)\to (\widehat M,\widehat \g, \widehat f)$ consists of linear maps
$\sigma_0:{A}\to\widehat{\g}$ and $\sigma_1:M\to \widehat M$
 such that  $p_0\circ\sigma_0=\id_{{A}}$, $p_1\circ\sigma_1=\id_{M}$ and $\widehat{f}\circ\sigma_1=\sigma_0\circ f$.

 Two extensions of Lie algebras
 $\widehat{\E}:0\longrightarrow(V,W,\varphi)\stackrel{i}{\longrightarrow}(\widehat M,\widehat \g, \widehat f)\stackrel{p}{\longrightarrow}(M,\g, f)\longrightarrow0$
 and  $\widetilde{\E}:0\longrightarrow(V,W,\varphi)\stackrel{j}{\longrightarrow}(\widetilde{M},\widetilde{\g}, \widetilde{f})\stackrel{q}{\longrightarrow}(M,\g, f)\longrightarrow0$ are equivalent,
 if there exists a morphism $F:(\widehat M,\widehat \g, \widehat f)\longrightarrow(\widetilde{M},\widetilde{\g}, \widetilde{f})$  such that $F\circ i=j$, $q\circ
 F=p$.

Let $(\widehat M,\widehat\g, \widehat f)$ be an extension of $(M,\g, f)$ by
$(V,W,\varphi)$ and $\sigma:(M,\g, f)\to (\widehat M,\widehat \g, \widehat f)$ be a splitting.
Define the following maps:
\begin{equation}\label{extension:bimod}
\left\{\begin{array}{rlclcrcl}
\cdot:&{A}\otimes V&\longrightarrow& V,&& x\cdot v&\triangleq&\sigma_0(x)\cdot v,\\
\cdot:&{A}\otimes W&\longrightarrow& W,&& x\cdot w&\triangleq&\sigma_0(x) w,\\
\trr:&W\otimes M&\longrightarrow&V,&&w\trr m&\triangleq&w\cdot\sigma_1(m),\\
\trl:&M\otimes W&\longrightarrow&V,&&m\trl w&\triangleq&\sigma_1(m)\cdot w,
\end{array}\right.
\end{equation}
for all $x\in {A}, v\in V, w\in W, m\in M$.

\begin{pro}\label{pro:2-modules}
With the above notations,  $(V,W,\varphi)$ is a bimodule of $(M,\g, f)$.
Furthermore, this bimodule structure does not depend on the choice of the splitting $\sigma$.
Moreover,  equivalent abelian extensions give the same  bimodule on $(V,W,\varphi)$.
\end{pro}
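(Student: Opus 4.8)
The plan is to realize $V$ and $W$ as the abelian ideals $i_1(V)\subseteq\wm$ and $i_0(W)\subseteq\wg$, so that under these identifications $\varphi$, $\alv$, $\alw$ are exactly the restrictions of $\widehat f$, $\alwm$, $\alwg$, and then to read off the structure on $(V,W,\varphi)$ from the ambient Hom-Leibniz algebra in $\LM$ structure on $(\wm,\wg,\widehat f)$ via the section $\sigma$. The first step is to check that the six operations of \eqref{extension:bimod} are well-defined, i.e.\ land in the claimed subspace. This uses that $(p_0,p_1)$ is a morphism in $\LM$ together with $V=\Ker p_1$, $W=\Ker p_0$: for instance $p_1(\sigma_0(x)\cdot v)=p_0(\sigma_0(x))\cdot p_1(v)=x\cdot 0=0$, whence $\sigma_0(x)\cdot v\in V$, and the analogous computations place $x\cdot w$, $v\cdot x$, $w\cdot x$ in $W$ and $w\trr m$, $m\trl w$ in $V$.

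The bulk of the argument is the verification of the bimodule axioms of the Definition: that $(V,\alv)$ and $(W,\alw)$ are bimodules of $(\g,\al)$ (axioms (L1)--(L3) for each), that $\varphi$ is equivariant \eqref{deflm01}, that $\trr,\trl$ satisfy \eqref{deflm02}, and that the six compatibility relations \eqref{deflm11}--\eqref{deflm16} hold. Each of these I would obtain by lifting elements of $\g$ and $M$ through $\sigma_0,\sigma_1$, applying the identity that already holds in $(\wm,\wg,\widehat f)$ (the Hom-Leibniz identity on $\wg$, the bimodule axioms of $\wm$ over $\wg$, or the equivariance of $\widehat f$), and then discarding the error terms produced because $\sigma$ is only a linear section. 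Here the crucial observation is that $\sigma_0$ is neither multiplicative nor compatible with $\al$, but the discrepancies $\sigma_0(xy)-\sigma_0(x)\sigma_0(y)$ and $\sigma_0(\al x)-\alwg\sigma_0(x)$ both lie in $W$; since the extension is abelian, every product of two elements of $W$ and every action of $W$ on $V$ is zero, so all such error terms drop out. The cleanest cases are the equivariance of $\varphi$ and \eqref{deflm02}: $\varphi(x\cdot v)=\widehat f(\sigma_0(x)\cdot v)=\sigma_0(x)\,\widehat f(v)=\sigma_0(x)\,\varphi(v)=x\cdot\varphi(v)$ by equivariance of $\widehat f$, and $\varphi(w\trr m)=\widehat f(w\cdot\sigma_1 m)=w\,\widehat f(\sigma_1 m)=w\,\sigma_0(f(m))=w\cdot f(m)$ using the splitting condition $\widehat f\circ\sigma_1=\sigma_0\circ f$.

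For the independence of the splitting, take two splittings $\sigma$ and $\sigma'$; then $\sigma_0(x)-\sigma_0'(x)\in\Ker p_0=W$ and $\sigma_1(m)-\sigma_1'(m)\in\Ker p_1=V$. Comparing the two induced operations, each difference is either a product of an element of $W$ with an element of $W$, or an action of an element of $W$ on an element of $V$; abelianness forces all of these to vanish, so the two splittings yield identical operations. For equivalent extensions $\widehat{\E}$ and $\widetilde{\E}$ related by $F=(F_0,F_1)$ with $F\circ i=j$ and $q\circ F=p$, I would observe that $F\circ\sigma$ is a splitting of $\widetilde{\E}$ (indeed $q\circ F\circ\sigma=p\circ\sigma=\id$ and $\widetilde f\circ F_1\sigma_1=F_0\widehat f\sigma_1=F_0\sigma_0 f$), and that $F\circ i=j$ forces $F_0$ and $F_1$ to restrict to the identity on $W$ and $V$. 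Since $F$ is a morphism, $F_1(\sigma_0(x)\cdot v)=F_0\sigma_0(x)\cdot F_1(v)$; as $\sigma_0(x)\cdot v\in V$ is fixed by $F_1$ and $F_1(v)=v$, the $\widetilde{\E}$-action computed from $F\circ\sigma$ equals the $\widehat{\E}$-action from $\sigma$, and the remaining five operations are handled identically. Combined with the splitting-independence, this proves that equivalent abelian extensions induce the same bimodule.

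I expect the main obstacle to be the sheer bookkeeping of \eqref{deflm11}--\eqref{deflm16} together with the careful tracking of the Hom-map error terms $\sigma_0(\al x)-\alwg\sigma_0(x)$, since the Hom maps do not commute with $\sigma$. The conceptual content, however, is entirely concentrated in the single principle that abelianness annihilates every such error term, being either a $W$-by-$W$ product or a $W$-on-$V$ action; once this principle is set up, each individual identity reduces to the corresponding axiom on $(\wm,\wg,\widehat f)$ and the computations are routine.
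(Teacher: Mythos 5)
Your proposal is correct and follows essentially the same route as the paper: well-definedness via the kernels of $(p_0,p_1)$, verification of the bimodule axioms by lifting through $\sigma$ and using abelianness to kill the error terms, splitting-independence from $\sigma_0-\sigma_0'\in W$ and $\sigma_1-\sigma_1'\in V$, and the same comparison via the morphism $F$ for equivalent extensions. If anything, you are slightly more careful than the paper in flagging that the discrepancy $\sigma_0(\al x)-\alwg\sigma_0(x)$ also lies in $W$ and must be absorbed by abelianness.
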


\begin{proof}  Firstly, we show that the bimodule is well-defined.
Since $\Ker p_{0} \cong W$, then for $w\in {W}$, we have $p_{0}(w)=0$.
By the fact that $(p_1,p_0)$ is a homomorphism between $(\widehat M,\widehat\g, \widehat f)$ and $(M,\g, f)$, we get
$$p_{1}(x\cdot w)=p_{1}(\sigma_0(x) w)=p_{0}\sigma_0(x)\cdot p_{0}(w)=p_{0}\sigma_0(x)\cdot 0=0.$$
Thus $x\cdot w\in  \ker p_{0} \cong {W}$.
Similar computations show that
$$p_{0}(x\cdot v)=p_{0}(\sigma_0(x)\cdot v)=p_{0}\sigma_0(x)\cdot p_{1}(v)=p_{0}\sigma_0(x)\cdot 0=0,$$
$$p_{0}(m\trl w)=p_{0}(\sigma_1(m)\cdot w)=p_{0}\sigma_1(m)\cdot p_{0}(w)=p_{0}\sigma_0(m)\cdot 0=0.$$
Thus $x\cdot v, m\trl w\in \Ker p_0=V$.

Now we  will show that these maps are independent of the choice of $\sigma$. In fact, if we choose another splitting
$\sigma':\g\to\wg$, then $p_0(\sigma_0(x)-\sigma'_0(x))=x-x=0$,
$p_1(\sigma_1(m)-\sigma'_1(m))=m-m=0$, i.e. $\sigma_0(x)-\sigma'_0(x)\in
\Ker p_0=W$, $\sigma_1(m)-\sigma'_1(m)\in \Ker p_1=V$. Thus,
$(\sigma_0(x)-\sigma'_0(x)(v+w)=0$, $(\sigma_1(m)-\sigma'_1(m))w=0$, which implies that the maps in \eqref{extension:bimod}  are independent
on the choice of $\sigma$. Therefore the bimodule structures are well-defined.

Secondly, we check that  $(V,W,\varphi)$ is indeed a bimodule of $(M,\g, f)$.
 Since $(V,W,\varphi)$ is an abelian Hom-associative algebra in $\LM$, we have
 \begin{eqnarray*}
   &&(xy)\cdot\alpha_V(v)-\alpha(x)\cdot(y\cdot v)\\
   &=&\sigma_0(xy)\cdot\alpha_V(v)-\sigma\alpha(x)\cdot(\sigma_0(y)\cdot v)\\
   &=&(\sigma_0(x)\sigma_0(y))\widehat{\alpha}(v)-\widehat{\alpha}\sigma_0(x)(\sigma_0(y)\cdot v)\\
   &=&0,
 \end{eqnarray*}
which implies that
$$(xy)\cdot\alpha_V(v)=\alpha(x)\cdot (y\cdot v).$$
Similarly, we have
$$(xy)\cdot\alpha_W(w)=\alpha(x)\cdot(y\cdot w).$$
Now $\varphi$ is a bimodule map since
\begin{eqnarray}
\varphi(x\cdot v)=\varphi(\sigma_0(x)\cdot v)=\sigma_0(x)\cdot\varphi(v)=x\cdot\varphi(v).
\end{eqnarray}
For $\trr: W\otimes M\to V$, we have
\begin{eqnarray}
\varphi( w\trr m)&=&\varphi(w\cdot\sigma_1(m))=w\cdot\widehat{f}\sigma_1(m)\notag\\
&=&w\cdot\sigma_0(f(m))=w\cdot f(m).
\end{eqnarray}
For $\trl: M\otimes W\to V$, we have
\begin{eqnarray}
\varphi( m\trl w)&=&\varphi(\sigma_1(m)\cdot w)=\widehat{f}\sigma_1(m)\cdot w\notag\\
&=&\sigma_0(f(m))\cdot w=f(m)\cdot w.
\end{eqnarray}
One check that these two maps  $\trr$ and $\trl$ satisfying  the conditions \eqref{deflm11}--\eqref{deflm16}.
Therefore, $(V,W,\varphi)$ is a bimodule of $(M,\g, f)$.

Finally, suppose that $\widehat{\E}$ and $\widetilde{\E}$ are equivalent abelian extensions,
and $F:(\widehat M,\widehat \g, \widehat f)\longrightarrow(\widetilde{M},\widetilde{\g}, \widetilde{f})$ be the morphism.
Choose linear sections $\sigma$ and $\sigma'$ of $p$ and $q$. Then we have $q_0F_0\sigma_0(x)=p_0\sigma_0(x)=x=q_0\sigma'_0(x)$,
thus $F_0\sigma_0(x)-\sigma'_0(x)\in \Ker q_0=W$. Therefore, we obtain
$$\sigma'_0(x)\cdot w=F_0\sigma_0(x)\cdot w=F_0(\sigma_0(x)\cdot w)=\sigma_0(x)\cdot w,$$
which implies that equivalent abelian extensions give the same module structures on $W$.
Similarly, we can show that equivalent abelian extensions also give the same $(V,W,\varphi)$.
Therefore, equivalent abelian extensions also give the same representation. The proof is finished.
\end{proof}

Let $\sigma:(M,\g, f)\to (\widehat M,\widehat \g, \widehat f)$  be a splitting of an abelian extension. Define the following linear maps:
\begin{equation}\label{extension:cocycle}
\left\{
\begin{array}{rlclcrcl}
\theta:& M &\longrightarrow& W ,&& \theta(m)&\triangleq&\widehat{\dM}\sigma_1(m)-\sigma_0(f(m)),\\
\omega:&\frkg \otimes\frkg&\longrightarrow& W ,&& \omega(x,y)&\triangleq&\sigma_0(x)\sigma_0(y)-\sigma_0(xy),\\
\mu:& A\otimes M&\longrightarrow&V,&& \mu(x,m)&\triangleq&\sigma_0(x)\cdot\sigma_1(m)-\sigma_1(x\cdot m),\\
\nu:&M\otimes A &\longrightarrow&V,&& \nu(m,x)&\triangleq&\sigma_1(m)\cdot\sigma_0(x)-\sigma_1(m\cdot x).
\end{array}\right.
\end{equation}
for all $x,y,z\in\frkg$, $m\in M $.

\begin{thm}\label{thm:2-cocylce}
With the above notations, $(\omega,\mu,\nu,\theta)$ is a $2$-cocycle of $(M,\g, f)$ with coefficients in $(V,W,\varphi)$.
\end{thm}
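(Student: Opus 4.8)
The plan is to use the splitting $\sigma=(\sigma_0,\sigma_1)$ to split the extension at the level of vector spaces and then read off each cocycle equation as the fibre-component of a structural axiom of $(\wm,\widehat{\g},\widehat f)$. Since $\Imm(i)=\Ker(p)$ and $p\circ\sigma=\id$, every element of $\widehat{\g}$ is uniquely $\sigma_0(x)+w$ with $x\in\g,\ w\in W$, and every element of $\wm$ is uniquely $\sigma_1(m)+v$ with $m\in M,\ v\in V$. The defining formulas \eqref{extension:cocycle} say precisely that, modulo $W$, the lifted product satisfies $\sigma_0(x)\sigma_0(y)=\sigma_0(xy)+\omega(x,y)$, that $\sigma_0(x)\cdot\sigma_1(m)=\sigma_1(x\cdot m)+\mu(x,m)$ and $\sigma_1(m)\cdot\sigma_0(x)=\sigma_1(m\cdot x)+\nu(m,x)$ modulo $V$, and that $\widehat f\sigma_1(m)=\sigma_0(f(m))+\theta(m)$. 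Because $(V,W,\varphi)$ is abelian, every product of two elements of $W$ vanishes and the $W$-action on $V$ is trivial, so in each expansion below all terms quadratic in the fibre drop out and the projection onto $W$ (resp. $V$) is clean; the surviving cross terms are exactly the bimodule actions $x\cdot w$, $w\trr m$, $m\trl w$ furnished by Proposition \ref{pro:2-modules}.

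First I would dispose of the two $\theta$-equations using equivariance of $\widehat f$. Evaluating \eqref{cm02} for $\widehat f$ on $\widehat x=\sigma_0(x)$, $\widehat m=\sigma_1(m)$ gives $\widehat f(\sigma_0(x)\cdot\sigma_1(m))=\sigma_0(x)\cdot_{\widehat{\g}}\widehat f\sigma_1(m)$; expanding both sides through \eqref{extension:cocycle} and cancelling the $\sigma_0$-parts by means of the base identity $f(x\cdot m)=xf(m)$ from \eqref{cm02} leaves exactly the fifth cocycle equation $\theta(x\cdot m)+\varphi(\mu(x,m))-\omega(x,f(m))-x\cdot\theta(m)=0$. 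The right-hand action yields the sixth equation in the same way.

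The remaining four equations are the fibre-projections of the left Hom-Leibniz identity and the three bimodule axioms. Applying \eqref{leftleib} in $\widehat{\g}$ to $\sigma_0(x),\sigma_0(y),\sigma_0(z)$ and projecting to $W$ gives the first, purely $\g$-valued cocycle equation, while the $\g$-component merely reproduces the left Hom-Leibniz identity of $\g$ and cancels. Likewise, applying the bimodule axioms (L1), (L2), (L3) for $\wm$ over $\widehat{\g}$ to the lifted triples $(\sigma_0(x),\sigma_0(y),\sigma_1(m))$, $(\sigma_1(m),\sigma_0(x),\sigma_0(y))$, and $(\sigma_0(x),\sigma_1(m),\sigma_0(y))$ respectively, and projecting to $V$, yields the second, third and fourth cocycle equations; the $V$-free terms reproduce the module axioms over $\g$ and cancel, while the defect terms assemble into the fibre contributions $\omega(x,y)\trr\alm(m)$ and $\alm(m)\trl\omega(x,y)$ produced whenever the $W$-part of a lifted product acts on a lifted element of $M$, exactly as in the displayed cocycle conditions.

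The only genuine subtlety, and the step I would watch most carefully, is the interaction with the Hom-maps: the section is required only to satisfy $p\circ\sigma=\id$ and $\widehat f\sigma_1=\sigma_0 f$, not $\widehat{\al}\sigma_0=\sigma_0\al$, so a priori $\widehat{\al}\sigma_0(x)-\sigma_0(\al(x))\in W$ need not vanish. One first checks that $W=\Ker p_0$ is $\widehat{\al}$-invariant and chooses $\sigma$ to intertwine the Hom-maps (equivalently, one verifies that $\omega,\mu,\nu,\theta$ are hom-cochains), after which $\widehat{\al}$ applied to a lifted element returns the lifted element of $\al$ with no fibre correction and the projections go through verbatim. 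Past this point the argument is pure bookkeeping: tracking which summand each term lands in and invoking the abelian hypothesis to kill the quadratic fibre terms.
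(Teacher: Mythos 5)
Your proposal is correct and follows essentially the same route as the paper: each of the six cocycle identities is obtained by applying the corresponding structural axiom of $(\wm,\widehat{\g},\widehat{f})$ (equivariance of $\widehat{f}$, the left Hom-Leibniz identity, and the three bimodule axioms) to lifted elements, expanding via \eqref{extension:cocycle}, and letting the base terms cancel while the abelian hypothesis kills the quadratic fibre terms. Your remark about the section intertwining the Hom-maps is a point the paper uses silently (it replaces $\wal\sigma_0(x)$ by $\sigma_0\al(x)$ without comment), so flagging and resolving it is a welcome extra precaution rather than a deviation.
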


\begin{proof}
Since $\widehat f$ is a bimodule map, we have the equality
$$\widehat{\dM}\big(\sigma_0(x)\cdot\sigma_1(m))\big)=\sigma_0(x)\widehat{\dM}\sigma_1(m),$$
then by \eqref{extension:cocycle} we get that
\begin{eqnarray*}
\widehat{\dM}\big(\mu(x,m)+\sigma_1( x\cdot m )\big)=\sigma_0(x)\big(\sigma_0(f(m))+\theta(m)\big),
\end{eqnarray*}
\begin{eqnarray*}
\varphi(\mu(x,m))+\theta( x\cdot m )+\sigma_0(f( x\cdot m ))= \omega(x,f(m))+\sigma_0(xf(m))+x\cdot \theta(m).
\end{eqnarray*}
Thus we obtain
\begin{eqnarray}\label{eq:c1}
x\cdot \theta(m)+\omega(x,f(m))&=&\theta( x\cdot m )+\varphi(\mu(x,m)).
\end{eqnarray}

 Since $(\wg,\wal_{\g})$ is a Hom-associative algebra,
$$
\wal(\sigma_0(x))\big(\sigma_0(y)\sigma_0(z)\big)=\big(\sigma_0(x)\sigma_0(y)\big)\wal(\sigma_0(z)),
$$
then we get
\begin{equation}\label{eq:c3}
\omega(xy,\alin z)+\omega (x, y)\cdot \al(z)=\alin x\cdot \omega (y, z)+\omega (\al(x), yz).
\end{equation}
Since $(\wm,\wal_M)$ is a left module of $(\wg,\wal)$,  we have the equality
\begin{eqnarray*}
&&\wal\sigma_0(x)\cdot(\sigma_0(y)\cdot\sigma_1(m))-(\sigma_0(x)\sigma_0(y))\cdot(\wal_M\sigma_1(m))\\
&=&(\sigma_0\al(x))\cdot\big(\mu(y,m)+\sigma_1(y\cdot m)\big)\\
&&-\big(\omega(x,y)+\sigma_0(xy)\big)\cdot(\sigma_1\alm(m))\\
&=&\sigma_1(\al(x)\cdot (y\cdot m))+\mu(\al(x),y\cdot m)+\al(x)\cdot \mu(y,m)\\
%&&-\sigma_1(\rho(\al(y))\circ x\cdot m )-\mu(\al(y), x\cdot m )-\rho_{1}(\al(y))\mu(x,m)\\
&&-\sigma_1((xy)\cdot \alm(m))-\mu(xy,\alm(m))-\omega(x,y)\trr \alm(m)\\
&=&0.
\end{eqnarray*}
Thus, we get
\begin{eqnarray}
&&\al(x)\cdot\mu(y,m)+\mu(\al(x),y\cdot m)\nonumber\\
&&-\omega(x,y)\trr\alm(m)-\mu(xy,\alm(m))=0.\label{eq:c41}
\end{eqnarray}
Similarly, since $(\wm,\wal_M)$ is a right module of $(\wg,\wal)$,  we obtain
\begin{eqnarray}
&&\nu(m,x)\cdot \al(y)+\nu(m\cdot x,\al(y))\nonumber\\
&&-\alm(m)\trl\omega(x,y)-\nu(\alm(m), xy)=0.\label{eq:c42}
\end{eqnarray}
At last, since $(\wm,\wal_M)$ is a bimodule of $(\wg,\wal)$,  we have the equality
\begin{eqnarray*}
&&(\sigma_0(x)\cdot\sigma_1(m))\cdot \wal\sigma_0(y)-(\wal_M\sigma_0(x))\cdot(\sigma_1(m)\sigma_0(y))\\
&=&(\sigma_0\al(x))\cdot\big(\mu(y,m)+\sigma_1(y\cdot m)\big)\\
&&-\big(\omega(x,y)+\sigma_0(xy)\big)\cdot(\sigma_1\alm(m))\\
&=&\sigma_1(\al(x)\cdot (m\cdot y))+\mu(\al(x), m\cdot y)+\al(x)\cdot \nu(m,y)\\
&&-\sigma_1((x\cdot m)\cdot \al(y))-\mu(x\cdot m,\al(y))-\mu(x,m)\cdot \al(y)\\
&=&0.
\end{eqnarray*}
Thus, we get
\begin{eqnarray}
&&\mu(\al(x), m\cdot y)+\al(x)\cdot \nu(m,y))\nonumber\\
&&-\nu(x\cdot m, \al(y))-\mu(x,m)\cdot\al(y)=0.\label{eq:c43}
\end{eqnarray}

Therefore, by the above discussion, we obtain that $(\omega,\mu,\nu,\theta)$ is a $2$-cocycle of $(M,\g, f)$ with coefficients in $(V,W,\varphi)$.
This complete the proof.
\end{proof}

Now we define the Hom-associative algebra structure on $(M \oplus V, \g \oplus W, \widehat{f})$
 using the 2-cocycle given above. More precisely, we have
\begin{equation}\label{eq:multiplication}
\left\{\begin{array}{rcl}
\widehat{f}(m+v)&\triangleq&\dM(m)+\theta(v)+\varphi(v),\\
{(x+w)(x'+w')}&\triangleq&xx'+\omega(x,x')+x\cdot w'+w\cdot x',\\
{(x+w)\cdot (m+v)}&\triangleq& x\cdot m +\mu(x,m)+x\cdot v+ w\trr m,\\
{(m+v)\cdot (x+w)}&\triangleq& m\cdot x +\nu(m,x)+v\cdot x+ m\trl w,
\end{array}\right.
\end{equation}
for all $x,y,z\in{A}$, $m,n\in M$,
$v\in V$ and $w\in W$. Thus any extension
$\widehat{E}$ given by \eqref{eq:ext1} is isomorphic to
\begin{equation}\label{eq:ext2}
\CD
  0 @>0>>  V @>i_1>>  M \oplus V@>p_1>>  M  @>0>> 0 \\
  @V 0 VV @V \varphi VV @V \widehat{\dM} VV @V\dM VV @V0VV  \\
  0 @>0>>  W  @>i_0>> \g \oplus W @>p_0>> \g @>0>>0,
\endCD
\end{equation}
where the Hom-associative algebra structure in $\LM$  is given as above \eqref{eq:multiplication}.

\begin{thm}
There is a one-to-one correspondence between equivalence classes of abelian extensions and the elements of the second cohomology group $\mathbf{H}^2((M,\g, f),(V,W,\varphi))$.
\end{thm}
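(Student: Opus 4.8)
The plan is to construct two mutually inverse maps between the set of equivalence classes of abelian extensions of $(M,\g,f)$ by $(V,W,\varphi)$ and the cohomology group $\mathbf H^2((M,\g,f),(V,W,\varphi))$, and to check that each is well defined.

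First I would build the map $\Phi$ from extensions to cohomology. Given an abelian extension $\widehat{\E}$ as in \eqref{eq:ext1}, choose a splitting $\sigma=(\sigma_0,\sigma_1)$; by Proposition \ref{pro:2-modules} the induced bimodule $(V,W,\varphi)$ is independent of this choice, and by Theorem \ref{thm:2-cocylce} the formulas \eqref{extension:cocycle} produce a $2$-cocycle $(\omega,\mu,\nu,\theta)$. The first thing to verify is that its class in $\mathbf H^2$ is independent of $\sigma$. If $\sigma'=(\sigma'_0,\sigma'_1)$ is a second splitting, then $p_0\circ(\sigma_0-\sigma'_0)=0$ and $p_1\circ(\sigma_1-\sigma'_1)=0$, so $N_0:=\sigma_0-\sigma'_0$ and $N_1:=\sigma_1-\sigma'_1$ take values in $W=\Ker p_0$ and $V=\Ker p_1$ and define a $1$-cochain $(N_0,N_1)\in\Hom(\g,W)\oplus\Hom(M,V)$. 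Substituting into \eqref{extension:cocycle} and comparing with the explicit formula for $D_1$, I expect to obtain $(\omega,\mu,\nu,\theta)-(\omega',\mu',\nu',\theta')=D_1(N_0,N_1)$, so the two cocycles are cohomologous. Combined with the last assertion of Proposition \ref{pro:2-modules}, a short diagram chase, in which the equivalence morphism $F$ composed with a splitting of one extension differs from a splitting of the other by a map into $V\oplus W$, shows that equivalent extensions give the same class, so $\Phi$ is well defined.

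Second I would build the reverse map $\Psi$. Given a $2$-cocycle $(\omega,\mu,\nu,\theta)$, equip $(M\oplus V,\g\oplus W,\widehat f)$ with the operations \eqref{eq:multiplication} and the Hom maps $\al\oplus\alw$ and $\alm\oplus\alv$. The key verification is that this is a Hom-Leibniz algebra in $\LM$, and here the six cocycle identities are used one-for-one: the $\delta_1\omega=0$ component yields the left Hom-Leibniz identity on $\g\oplus W$, the $\mu$- and $\nu$-components yield the module and bimodule axioms \eqref{deflm11}--\eqref{deflm16} for $M\oplus V$, and the two $\theta$-components yield the equivariance \eqref{cm01}--\eqref{cm02} of $\widehat f$. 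This is the abelian-extension analogue of the semidirect-product construction in Proposition \ref{pro:semidirectproduct}, with the cocycle terms accounting for the non-split part, so it follows the same pattern. The sequence \eqref{eq:ext2} with the evident inclusions and projections is then an abelian extension, and I must check that cohomologous cocycles give equivalent extensions: if $(\omega,\mu,\nu,\theta)-(\omega',\mu',\nu',\theta')=D_1(N_0,N_1)$, the maps $F_0(x+w)=x+(w+N_0(x))$ and $F_1(m+v)=m+(v+N_1(m))$ should assemble into an isomorphism of extensions, and confirming this uses precisely the components of $D_1$.

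Finally I would show $\Phi$ and $\Psi$ are mutually inverse. For $\Phi\circ\Psi=\id$, choose on \eqref{eq:ext2} the obvious splitting $\sigma_0(x)=x+0$, $\sigma_1(m)=m+0$ and read off from \eqref{extension:cocycle} that the associated cocycle is the original $(\omega,\mu,\nu,\theta)$. For $\Psi\circ\Phi=\id$, given $\widehat{\E}$ with splitting $\sigma$, the identifications $\widehat\g\cong\g\oplus W$ and $\widehat M\cong M\oplus V$ via $\hat g\mapsto p_0(\hat g)+(\hat g-\sigma_0 p_0(\hat g))$ furnish an equivalence onto the extension built from its own cocycle, as already indicated by the remark that \eqref{eq:ext1} is isomorphic to \eqref{eq:ext2}. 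The main obstacle is not conceptual but the bookkeeping in the second step: checking that \eqref{eq:multiplication} satisfies every axiom of a Hom-Leibniz algebra in $\LM$ and matching each resulting identity with exactly one of the six $2$-cocycle conditions, together with confirming that the candidate isomorphism $F$ is simultaneously compatible with both multiplications, both module actions, and the two equivariance relations.
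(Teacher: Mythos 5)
Your proposal is correct and follows essentially the same route as the paper: extract a $2$-cocycle from a splitting via \eqref{extension:cocycle}, reconstruct an extension from a cocycle via \eqref{eq:multiplication}, and show that equivalences $F_0(x+w)=x+b_0(x)+w$, $F_1(m+v)=m+b_1(m)+v$ correspond exactly to coboundaries $D_1(b_0,b_1)$. You are somewhat more explicit than the paper about the well-definedness of the cocycle class under change of splitting and about checking that the two assignments are mutually inverse, but these are details the paper's argument implicitly relies on rather than a different method.
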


\begin{proof}
We have know from the above discussion that  abelian extensions of Hom-associative algebra in $\LM$ are correspond to 2-cocylce and vice verse.
Let $\E'$ be another abelian extension determined by the 2-cocycle $(\theta',\omega',\mu',\nu')$. We are going to show that $\E$ and $\E'$ are equivalent if and only if 2-cocycles  $(\omega,\mu,\nu,\theta)$ and $(\theta',\omega',\mu',\nu')$ are in the same cohomology class.

 Since $F$ is an equivalence of abelian extensions, there exist two linear maps $b_0:\g \longrightarrow W$
and $b_1: M \longrightarrow V$ such that
 $$F_0(x+w)=x+b_0(x)+w,\quad F_1(m+v)=m+b_1(m)+v.$$

First, by the equality
\begin{eqnarray*}
\label{eqn:fi} \widehat{f}'F_1(m)&=&F_0\widehat{f}(m),
\end{eqnarray*}
we have
\begin{equation}\label{eq:exact1}
\theta(m)-\theta'(m)=\varphi b_1(m)-b_0(\dM (m)).
\end{equation}

 Furthermore, we have
$$
F_0(xy+\omega(x,y))=F_0(x)F_0(y)
$$
which implies that
\begin{equation}\label{eq:exact2}
\omega(x,y)-\omega'(x,y)=x\cdot b_0(y)+b_0(x)\cdot y-b_0(xy).
\end{equation}

Similarly, by the equality
$$F_1(x\cdot m+\mu(x,m))=F_0(x)\cdot F_1(m)$$
we get
\begin{equation}\label{eq:exact3}
\mu(x,m)-\nu'(x,m)=x\cdot b_1(m)+b_0(x)\cdot m-b_1(x\cdot m).
\end{equation}

By \eqref{eq:exact1}-\eqref{eq:exact3}, we deduce that $(\psi,\omega,\mu,\nu)-(\psi',\omega',\mu',\nu')=D(b_0,b_1)$. Thus, they are in the same cohomology class.

Conversely, if  $(\omega,\mu,\nu,\theta)$ and $ (\theta',\omega',\mu',\nu')$ are in the same cohomology class,
assume that $(\omega,\mu,\nu,\theta)-(\theta',\omega',\mu',\nu')=D(b_0,b_1)$.
Then we define $(F_0,F_1)$ by
 $$F_0(x+w)=x+b_0(x)+w,\quad F_1(m+v)=m+b_1(m)+v.$$
Similar as the above proof, we can show that $(F_0,F_1)$ is an equivalence. We omit the details.
\end{proof}

\section*{Acknowledgements}
This work is supported in part by National Natural Science Foundation of China (11961049).
We would like to thank the referee for very helpful comments and suggestions for this paper.

\section*{Declarations}

 {\bf Competing interests} There are no conflicts of interest for this work.

\noindent {\bf Availability of data and materials} Data sharing is not applicable to this article as no datasets were generated or analyzed during the current study.

\end{document}